\DeclareMathOperator{\Dom}{Dom}
\DeclareMathOperator{\End}{End}
\DeclareMathOperator{\Orb}{Orb}
\newcommand{\mysetminus}{\mathbin{\fgebackslash}}
\newtheorem{theorem}{Theorem}[section]
\newtheorem{lemma}[theorem]{Lemma}
\newtheorem{proposition-definition}[theorem]{Proposition-Definition}
\newtheorem{corollary}[theorem]{Corollary}
\theoremstyle{definition}
\newtheorem*{definition}{Definition}
\theoremstyle{remark}
\newtheorem{remark}[theorem]{Remark}
\newtheorem{example}[theorem]{Example}
\renewcommand*\env@matrix[1][*\c@MaxMatrixCols c]{%
  \hskip -\arraycolsep
  \let\@ifnextchar\new@ifnextchar
  \array{#1}}
\theoremstyle{remark}
\newcommand*{\Scale}[2][4]{\scalebox{#1}{$#2$}}%
\title[Dynamical Degrees of Random Maps]{Dynamical and arithmetic degrees for random iterations of maps on Projective Space}
\begin{document}
\author[Wade Hindes]{Wade Hindes}
\address{Department of Mathematics, Texas State University, 601 University Dr., San Marcos, TX 78666.}
\email{wmh33@txstate.edu}
\maketitle
\renewcommand{\thefootnote}{}
\footnote{2010 \emph{Mathematics Subject Classification}: Primary: 37P15, 37A50. Secondary: 11G50.}
\begin{abstract} We show that the dynamical degree of an (i.i.d) random sequence of dominant, rational self-maps on projective space is almost surely constant. We then apply this result to height growth and height counting problems in random orbits.   
\end{abstract} 
\section{Introduction} 
Let $K$ be a number field and let $X$ be a projective variety defined over $K$. Given a collection $S$ of self maps on $X$, we are interested in the arithmetic properties of dynamical orbits generated by $S$. To define the orbits we consider, let $\gamma=(\theta_1,\theta_2\dots)$ be an infinite sequence of elements of $S$ and let $\gamma_n=\theta_n\circ\theta_{n-1}\circ\dots\circ\theta_1$ for $n\geq1$. Then given a point $P\in X$, we define the orbit of the pair $(\gamma,P)$ to be the set
\begin{equation}\label{orbitdef}
\Orb_\gamma(P)=\{P,\gamma_1(P),\gamma_2(P),\dots\}.
\end{equation} 
In particular, if $X$ is equipped with a height function $h:X(\overline{K})\rightarrow\mathbb{R}$, then we are interested in the growth rate of $h(\gamma_n(P))$ as we move within an orbit. This problem is (somewhat) understood when $S\subseteq\End(X)$ is a set of morphisms, as the height growth rate in orbits can be described by an associated canonical height function; see the seminal work of Call and Silverman \cite{Call-Silverman} for the case when $S=\{\phi\}$ is a single map and see work of Kawaguchi \cite{Kawaguchi} for a generalization to collections of maps. However, this problem is much less understood in the case of dominant rational maps, even for singletons $S$; see \cite{KawaguchiSilverman} for results along these lines. One obstruction that arises when the maps in $S$ fail to be morphisms is that the growth rate of the degree (say for maps on $\mathbb{P}^N$) is mysterious in general. Further still, when $S$ contains at least two maps (even morphisms), there exist some sequences of functions whose degree growth can be quite complicated, as well will see. 

To begin our study of dynamical and arithmetic degrees, we fix some notation. Let $X=\mathbb{P}^N$ and let $S\subseteq\Dom(\mathbb{P}^N)$ be a set of dominant rational maps on $\mathbb{P}^N$. Then (by analogy with the singleton case \cite{SilvermanPN}) we define the dynamical degree of infinite sequences:  
\begin{definition} Let $\gamma$ be a sequence of elements of $S$. Then the \emph{dynamical degree} of $\gamma$ is       
\begin{equation}\label{dyndeg}
\delta_\gamma=\lim_{n\rightarrow\infty}\deg(\gamma_n)^{1/n},
\end{equation} 
provided that the limit exists.
\end{definition}  
However, as was indicated above, the dynamical degree is more subtle in the case of multiple maps. In particular, the limit in (\ref{dyndeg}) need not exist in general, even for sets of morphisms. The following example was shown to the author by J. Silverman.      
\begin{example} Let $\phi_1,\phi_2:\mathbb{P}^N\rightarrow\mathbb{P}^N$ be morphisms of degree $d_1$ and $d_2$ respectively. Consider the set $S=\{\phi_1,\phi_2\}$ and the sequence $\gamma$ corresponding to  
\[\gamma\leftrightarrow(1,2,2,1,1,1,1,2,2,2,2,2,2,2,2,1,1,\dots);\]
here we alternately repeat $1$'s and $2$'s, each time doubling the number of occurrences. Then it is easy to see that  
\[\deg(\gamma_{2^k-1})=
\begin{cases} 
      d_1^{\frac{2}{3}2^k-\frac{1}{3}}d_2^{\frac{1}{3}2^k-\frac{2}{3}}& \text{if $k$ is odd,} \\[7pt]
      d_1^{\frac{1}{3}2^k-\frac{2}{3}}d_2^{\frac{2}{3}2^k-\frac{1}{3}} & \text{if $k$ is even.} \\
     \end{cases}
\]
Hence, it follows that 
\[\lim_{\substack{k\rightarrow\infty\\k\;\text{odd}}} \deg(\gamma_{2^k-1})^{1/(2^k-1)}=d_1^{2/3}d_2^{1/3}\;\;\;\text{and}\;\;\;\lim_{\substack{k\rightarrow\infty\\k\;\text{even}}} \deg(\gamma_{2^k-1})^{1/(2^k-1)}=d_1^{1/3}d_2^{2/3}.\vspace{.15cm}\] 
In particular, the limit defining $\delta_\gamma$ in (\ref{dyndeg}) does not exist (unless $d_1=d_2$). 
\end{example} 
On the other hand, one expects that the limit defining the dynamical degree exists for ``most" sequences. To make this precise we define a probability measure on the set of infinite sequences of elements of $S$. There are several ways to do this, but the most natural (from our perspective), is to fix a probability measure $\nu$ on the set $S$ and then extend this to the product space $\Phi_S=\Pi_{i=1}^\infty S$ via the product measure $\bar{\nu}$; see \cite[\S10.4]{ProbabilityText} for details. Probabilists refer to the corresponding probability space $(\Phi_S,\bar{\nu})$ as the space of independent, identically distributed (or i.i.d) sequences of elements of $S$ (distributed according to $\nu$). In particular, the measure $\bar{\nu}$ is an extension of the natural measure $\nu_n$ on the finite product $\Phi_{S,n}=S^n$, and we may therefore talk about the \emph{expected value} $\mathbb{E}_n[f_n]=\int_{\Phi_{S,n}} f_n\,d\nu_n$ of a random variable $f_n:\Phi_{S,n}\rightarrow\mathbb{R}$ on $n$-term sequences.
\begin{remark}{\label{discrete}} Since the sets $S$ we consider are always finite or countably infinite, we assume that all subsets of $S$ are $\nu$-measurable.  
\end{remark} 
We are ready to state our first result. That is, for suitable subsets $S\subseteq\Dom(\mathbb{P}^N)$ we show that the dynamical degree \textbf{exists and is constant} outside of a $\bar{\nu}$-measure zero set of sequences. This may be viewed as a sort of law of large numbers for random degree growth. In particular, the result below holds whenever $S$ is a finite set of dominant rational maps.          
\begin{theorem}{\label{thm:dyndeg}} Let $S\subseteq\Dom(\mathbb{P}^N)$ be a set of dominant rational self-maps on $\mathbb{P}^N$, let $\nu$ be a probability measure on $S$, and let $(\Phi_S,\bar{\nu})$ be the space of i.i.d sequences of elements of $S$ distributed according to $\nu$. If $\mathbb{E}_1[\log\deg(\phi)]$ exists, then there is a constant $\delta_{S,\nu}$, given by \vspace{.1cm} 
\[\delta_{S,\nu}:=\exp\Bigg(\inf_{n\geq1}\frac{\mathbb{E}_n\big[\log\,\deg(\gamma_n)\big]}{n} \Bigg),\] 
such that 
\[ \delta_{S,\nu}=\lim_{n\rightarrow\infty}\deg(\gamma_n)^{1/n}:=\delta_{\gamma}\vspace{.1cm}\] 
for $\bar{\nu}$-almost every sequence $\gamma\in\Phi_S$. 
\end{theorem}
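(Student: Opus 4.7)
The plan is to recognize this as a direct application of Kingman's subadditive ergodic theorem to the shift dynamical system on the sequence space $(\Phi_S,\bar{\nu})$.

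The starting point is the submultiplicativity of degree for dominant rational self-maps of $\mathbb{P}^N$: for any $\phi,\psi \in \Dom(\mathbb{P}^N)$,
\[
\deg(\phi \circ \psi) \leq \deg(\phi)\cdot\deg(\psi).
\]
Setting $X_n(\gamma) := \log \deg(\gamma_n)$, I would let $\sigma:\Phi_S \to \Phi_S$ denote the shift $(\theta_1,\theta_2,\dots) \mapsto (\theta_2,\theta_3,\dots)$ and observe that $\gamma_{n+m} = (\theta_{n+m}\circ\cdots\circ\theta_{n+1}) \circ \gamma_n = (\sigma^n\gamma)_m \circ \gamma_n$. Submultiplicativity of degree then yields the subadditive cocycle identity
\[
X_{n+m}(\gamma) \leq X_n(\gamma) + X_m(\sigma^n \gamma) \quad \text{for all } n,m \geq 1.
\]

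Next, I would verify the hypotheses of Kingman's theorem. Since $\bar{\nu}$ is the product measure coming from $\nu$ on $S$, the shift $\sigma$ preserves $\bar{\nu}$ and is well known to be ergodic (in fact strongly mixing) on an i.i.d. product space; this is the Kolmogorov $0$-$1$ law applied to tail events. The integrability hypothesis $\mathbb{E}_1[\log \deg(\phi)] < \infty$ is precisely the condition $\mathbb{E}[X_1^+] < \infty$ needed by Kingman (and $X_n \geq 0$ since $\deg(\gamma_n) \geq 1$, so there is no issue with the negative part).

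Applying Kingman's subadditive ergodic theorem then gives that
\[
\frac{X_n(\gamma)}{n} = \frac{\log \deg(\gamma_n)}{n}
\]
converges $\bar{\nu}$-almost surely to a $\sigma$-invariant random variable $X_\infty$ with $\mathbb{E}[X_\infty] = \inf_{n\geq 1} \mathbb{E}[X_n]/n$. By ergodicity of $\sigma$, the limit $X_\infty$ is constant $\bar{\nu}$-almost everywhere, equal to
\[
\inf_{n \geq 1} \frac{\mathbb{E}_n[\log \deg(\gamma_n)]}{n}.
\]
Exponentiating yields the stated formula for $\delta_{S,\nu}$ and the almost-sure convergence $\deg(\gamma_n)^{1/n} \to \delta_{S,\nu}$.

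The main obstacle is essentially bookkeeping: one must set up the cocycle in the correct order (because $\gamma_n$ composes on the \emph{right} of newly appended maps in the definition, whereas the shift strips from the \emph{left}), and verify that the resulting inequality is genuinely subadditive with respect to $\sigma$. Once that is done, the remainder is a standard invocation of Kingman's theorem together with ergodicity of the Bernoulli shift. The finiteness of $\mathbb{E}_1[\log \deg(\phi)]$ is automatic when $S$ is finite, which yields the claimed corollary for finite sets of dominant rational maps.
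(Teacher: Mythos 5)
Your proposal follows the paper's proof step for step: you define the same subadditive cocycle $X_n(\gamma)=\log\deg(\gamma_n)$, derive the subadditivity from $\deg(\phi\circ\psi)\leq\deg(\phi)\deg(\psi)$, check integrability and ergodicity of the Bernoulli shift, and invoke Kingman's subadditive ergodic theorem before exponentiating. This is exactly the argument given in the paper, including the observation that the i.i.d.\ hypothesis makes $\mathbb{E}[X_n]\leq n\,\mathbb{E}_1[\log\deg(\phi)]$ finite and that nonnegativity of $X_n$ disposes of the negative part.
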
 
In particular, Theorem \ref{thm:dyndeg} motivates the following definition of the dynamical degree associated to a set of maps with a probability measure. 
\begin{definition} If $(S,\nu)$ satisfy the hypothesis of Theorem \ref{thm:dyndeg}, then we call the constant $\delta_{S,\nu}$ the dynamical degree of $(S,\nu)$.  
\end{definition} 
\begin{remark} See \cite{Mello} for an alternative definition of the dynamical degree of $S$ for finite sets. In particular, this definition uses the maximum degree of a map in $\Phi_{S,n}$ to define a corresponding $\limsup$ (not a full limit). However, we note that if $S=\{\phi_2,\phi_2\}$ are maps of degree $d_1$ and $d_2$ (with $d_1<d_2$) then the \emph{only} sequence $\gamma$ that obtains this maximum is the constant sequence associated to $\phi_2$. For this reason, we believe that the constant $\delta_{S,\nu}$ from Theorem \ref{thm:dyndeg}, subject to some weighting of sequences, is a more reflective measure of the collective degree growth of arbitrary compositions of maps in $S$.  
\end{remark}   
On the other hand, in the case of morphisms, we are able to calculate explicitly the dynamical degree of a pair $(S,\nu)$, especially in the case of finite sets of morphisms.   
\begin{theorem}\label{thm:morphdyndeg} Let $S\subseteq\End(\mathbb{P}^N)$ be a set of non-constant endomorphisms on $\mathbb{P}^N$, let $\nu$ be a probability measure on $S$, and let $(\Phi_S,\bar{\nu})$ be the space of i.i.d sequences of elements of $S$ distributed according to $\nu$. Then the dynamical degree $\delta_{S,\nu}$ from Theorem \ref{thm:dyndeg} is given by \vspace{.1cm}   
\[\delta_{S,\nu}=\exp\Big(\mathbb{E}_1\big[\log\deg(\phi)\big]\Big).\] 
In particular, if $S$ is finite, then 
\[\delta_{S,\nu}=\prod_{\phi\in S}\deg(\phi)^{\nu(\phi)},\]
i.e., $\delta_{\nu,S}$ is a weighted geometric mean of the degrees of the maps in $S$. 
\end{theorem}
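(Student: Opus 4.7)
The plan is to exploit the fact that for \emph{morphisms} of $\mathbb{P}^N$, the degree is strictly multiplicative under composition: if $\phi,\psi\in\End(\mathbb{P}^N)$ then $\deg(\phi\circ\psi)=\deg(\phi)\deg(\psi)$. This is the key property that fails in general for dominant rational maps and is what makes the morphism case so much simpler than Theorem \ref{thm:dyndeg}. With this in hand, for any sequence $\gamma=(\theta_1,\theta_2,\dots)\in\Phi_S$ we obtain the telescoping identity
\[
\log\deg(\gamma_n)=\sum_{i=1}^{n}\log\deg(\theta_i).
\]

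Since $(\Phi_S,\bar{\nu})$ is an i.i.d.\ product space, the random variables $X_i(\gamma):=\log\deg(\theta_i)$ are i.i.d.\ with common mean $\mathbb{E}_1[\log\deg(\phi)]$ (finite by the hypothesis inherited from Theorem \ref{thm:dyndeg}; note that for nonconstant endomorphisms $\deg(\phi)\geq 1$, so $X_i\geq 0$, and in the finite case integrability is automatic). The next step is to apply Kolmogorov's strong law of large numbers to the sequence $(X_i)$, which yields
\[
\frac{1}{n}\log\deg(\gamma_n)=\frac{1}{n}\sum_{i=1}^{n}X_i(\gamma)\;\longrightarrow\;\mathbb{E}_1[\log\deg(\phi)]
\]
for $\bar{\nu}$-almost every $\gamma$. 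Exponentiating gives $\delta_\gamma=\exp(\mathbb{E}_1[\log\deg(\phi)])$ almost surely. Since Theorem \ref{thm:dyndeg} already identifies the almost sure value with $\delta_{S,\nu}$, this proves the first formula.

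For the finite case, I would simply unpack the expectation: writing
\[
\mathbb{E}_1[\log\deg(\phi)]=\sum_{\phi\in S}\nu(\phi)\log\deg(\phi)=\log\prod_{\phi\in S}\deg(\phi)^{\nu(\phi)},
\]
and exponentiating yields the weighted geometric mean formula. Because the argument reduces almost entirely to multiplicativity of degree plus a standard application of the SLLN, there is no substantive obstacle; the only point requiring care is verifying that the hypothesis $\mathbb{E}_1[\log\deg(\phi)]$ exists is in force, which is immediate when $S$ is finite and is assumed in the general case.
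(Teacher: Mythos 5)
Your argument is correct. The paper's proof of this theorem uses Birkhoff's Ergodic Theorem applied to the shift map on $\Phi_S$ with $f(\gamma)=\log\deg(\theta_1)$, which in the i.i.d.\ setting is just the strong law of large numbers in ergodic-theoretic clothing; the paper even notes explicitly (Remark~\ref{rmk:lln}) that the SLLN gives an alternate proof. Both routes hinge on exactly the same key observation you isolate, namely that degree is strictly multiplicative under composition of morphisms, so that $\log\deg(\gamma_n)=\sum_{i=1}^n\log\deg(\theta_i)$ becomes a genuine i.i.d.\ partial sum. The paper prefers Birkhoff purely for expository coherence, since it sits alongside Kingman's subadditive ergodic theorem used for the rational-map case in Theorem~\ref{thm:dyndeg}; your SLLN version is a touch more elementary and requires no ergodicity lemma for the shift. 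One small note: you remark that nonconstant endomorphisms have $\deg(\phi)\geq 1$ so $X_i\geq 0$; that is fine, but the integrability of $X_1$ still needs the hypothesis that $\mathbb{E}_1[\log\deg(\phi)]$ exists (finite) when $S$ is infinite, which you correctly flag as inherited from Theorem~\ref{thm:dyndeg}.
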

\begin{remark} Likewise in the case of morphisms (of unequal degrees), one may formulate a central limit theorem for degree growth. To formulate such a statement, define the i.i.d random variables $X_i:\Phi_S\rightarrow\mathbb{R}$ given by $X_i(\gamma)=\log\deg(\theta_i)$ for $\gamma=(\theta_n)_{n\geq1}$. Moreover, assume that the variance $\sigma^2$ of the $X_i$ is non-zero. Then when $S$ is a set of morphisms, \vspace{.05cm}   
\[Z_n(\gamma)=\frac{\sqrt{n}}{\sigma}\Bigg(\frac{X_1(\gamma)+\dots +X_n(\gamma)}{n}- \log\delta_{S,\nu}\Bigg)=\frac{\sqrt{n}}{\sigma}\Big(\log\deg(\gamma_n)^{1/n} - \log\delta_{S,\nu}\Big).\] 
Moreover, the Central Limit Theorem \cite[Theorem 3.4.1]{durrett} implies that $Z_n$ converges in distribution to the standard normal distribution; see also \cite[Theorem 21.4]{ProbabilityText} for an explicit convergence rate estimate for the Central Limit Theorem.      
\end{remark}
Equipped with some understanding of dynamical degrees, we now turn our attention to the study of height growth rates in orbits. However, for rational maps (with non-trivial indeterminacy locus) we must take some care to make sure that the orbits we consider are well-defined. Given a dominant rational map $\phi: \mathbb{P}^N(\overline{\mathbb{Q}})\dashrightarrow\mathbb{P}^N(\overline{\mathbb{Q}})$ in $S$, we let $I_\phi$ be the indeterminacy locus of $\phi$ (i.e., $I_\phi$ is the set of points where $\phi$ is not defined). Similarly, let $I_S=\cup_{\phi\in S} I_\phi$. Then, we restrict ourselves to initial points in 
\[\mathbb{P}^N(\overline{\mathbb{Q}})_S:=\{P\in\mathbb{P}^N(\overline{\mathbb{Q}})\,:\, \Orb_\gamma(P)\cap I_S=\varnothing\;\text{for all $\gamma\in\Phi_S$}\}.\]
Moreover, in order to control certain height estimates in orbits (which become unruly when maps of degree $1$ appear), we need the following condition on $S$. 
\begin{definition} A set of dominant rational maps $S$ on $\mathbb{P}^N$ is called \emph{degree independent} if $\deg(\gamma_k)\geq2$ for all $\gamma_k\in\Phi_{S,k}$ and all $k\geq1$.   
\end{definition}
\begin{remark} If $S$ is a set of morphisms of degree at least $2$, then $S$ is degree independent. Therefore, the notion of degree independence is only necessary for non-morphisms.  
\end{remark} 
With the above definitions in place, we are able to bound the height growth rate of orbits (almost surely) by the dynamical degree; compare to the case when $S$ is a single rational map in \cite{KawaguchiSilverman,SilvermanPN}.      
\begin{theorem}{\label{thm:arith<dyn}} Let $S$ be a finite set of degree independent rational maps on $\mathbb{P}^N$, let $\nu$ be a probability measure on $S$, and let $(\Phi_S,\bar{\nu})$ be the space of i.i.d sequences of elements of $S$ distributed according to $\nu$. Then for all $P\in \mathbb{P}^N(\overline{\mathbb{Q}})_S$, we have that \vspace{.1cm}  
\[\limsup_{n\rightarrow\infty} h(\gamma_n(P))^{1/n}\leq \delta_{S,\nu}\]
for almost every $\gamma\in\Phi_S$.    
\end{theorem}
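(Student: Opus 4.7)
The plan is to reduce the theorem to a block-wise application of the strong law of large numbers (SLLN). The key ingredient is the standard functorial height inequality: for any dominant rational map $\phi: \mathbb{P}^N \dashrightarrow \mathbb{P}^N$ of degree $d$, there is a constant $C(\phi)$ with
\[h(\phi(Q)) \leq d\cdot h(Q) + C(\phi) \quad\text{for all}\quad Q \in \mathbb{P}^N(\overline{\mathbb{Q}}) \setminus I_\phi.\]
Since $S$ is finite, so is $\Phi_{S,k}$ for each $k \geq 1$, and hence the constants
\[C_k := \max_{\gamma_k \in \Phi_{S,k}} C(\gamma_k) \qquad\text{and}\qquad D_* := \max_{\phi \in S}\deg(\phi)\]
are finite.

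Fix $k \geq 1$ and decompose a random sequence into length-$k$ blocks: for $n = mk$, write $\gamma_{mk} = \Psi_m \circ \cdots \circ \Psi_1$ with $\Psi_j := \theta_{jk} \circ \cdots \circ \theta_{(j-1)k+1}$. The blocks $\Psi_j$ are i.i.d.\ random rational maps, each distributed as $\gamma_k$ under $\nu_k$; by degree independence, $\deg(\Psi_j) \geq 2$. The crucial point is to view each $\Psi_j$ as a \emph{single} dominant rational map of degree $\deg(\Psi_j)$ (which may be strictly less than $\prod_i \deg(\theta_{(j-1)k+i})$) and apply the height inequality to it; iterating across the $m$ blocks yields
\[h(\gamma_{mk}(P)) \leq \Bigl(\prod_{j=1}^m \deg(\Psi_j)\Bigr) \cdot \bigl(h(P) + m\, C_k\bigr).\]
Taking logarithms, dividing by $mk$, and invoking the SLLN on $\{\log\deg(\Psi_j)\}_{j \geq 1}$, one obtains, for $\bar\nu$-almost every $\gamma$,
\[\limsup_{m \to \infty} h(\gamma_{mk}(P))^{1/(mk)} \leq \exp\!\bigl(\mathbb{E}_k[\log\deg(\gamma_k)]/k\bigr).\]
For intermediate indices $n = mk + r$ with $0 \leq r < k$, applying the one-step bound $r$ times gives $h(\gamma_n(P)) \leq D_*^r h(\gamma_{mk}(P)) + r\,C_1\,D_*^r$, where $C_1 = \max_{\phi \in S} C(\phi)$; these corrections are absorbed into the $n$-th root as $n \to \infty$, so the $\limsup$ along the full sequence matches the one along $\{mk\}_{m \geq 1}$.

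Finally, I would intersect over $k \in \N$ the full-$\bar\nu$-measure events on which the SLLN holds. This countable intersection still has measure one, and on it
\[\limsup_n h(\gamma_n(P))^{1/n} \leq \inf_{k \geq 1} \exp\!\bigl(\mathbb{E}_k[\log\deg(\gamma_k)]/k\bigr) = \delta_{S,\nu}\]
by the infimum formula in Theorem \ref{thm:dyndeg}. The main technical hurdle is the block height inequality: one must use the true composition degree $\deg(\Psi_j)$ rather than the naive product $\prod_i \deg(\theta_{(j-1)k+i})$, since the latter would only produce the cruder upper bound $\exp(\mathbb{E}_1[\log\deg(\phi)])$, which can strictly exceed $\delta_{S,\nu}$ when the maps in $S$ are not all morphisms. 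Degree independence enters to keep each $\log\deg(\Psi_j) \geq \log 2$, ensuring the SLLN average is well-controlled and that the additive errors remain negligible compared with the block degree products.
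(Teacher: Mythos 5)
Your proposal is correct and follows essentially the same strategy as the paper's proof: decompose $\gamma_{mk}$ into length-$k$ blocks, apply the functorial height inequality to each block viewed as a \emph{single} rational map of its true degree $\deg(\Psi_j)$ (rather than the product of its component degrees), use degree independence to control the additive error, establish the bound along the subsequence $\{mk\}$, show the full $\limsup$ agrees with the subsequential one via finiteness of $S$, intersect full-measure events over $k$, and conclude with the infimum formula from Theorem~\ref{thm:dyndeg}. The only substantive difference is the probabilistic tool: you invoke the strong law of large numbers directly on the i.i.d.\ block variables $\log\deg(\Psi_j)$, whereas the paper applies Birkhoff's Ergodic Theorem with the $k$-shift $T^{k}$ and the random variable $F_{(k)}(\gamma)=\log\deg(\gamma_k)/k$. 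These are equivalent (as the paper itself notes in Remark~\ref{rmk:lln} in the closely analogous morphism case), and your SLLN framing is arguably a touch cleaner since it bypasses the need to justify ergodicity of $T^{k}$ --- a step the paper handles somewhat tersely, relying implicitly on the i.i.d.\ shift being mixing. Your intermediate-index correction constant $r\,C_1\,D_*^r$ differs slightly in form from the geometric sum $C_1(1 + D_* + \cdots + D_*^{r-1})$ one actually obtains, but since $r<k$ both are bounded independently of $m$ and vanish after taking $(mk+r)$-th roots, so the argument is unaffected.
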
 

Of course, one would like to replace the $\limsup$ in Theorem \ref{thm:arith<dyn} with a limit and the upper bound with an equality. Likewise, one would like to obtain results for (suitable) infinite sets $S$, as we did for dynamical degrees. To do this, we restrict our attention to sets of morphisms. However, as we hope to illustrate below, this remains a subtle problem even for morphisms; see Example \ref{eg:halffinite}. Moreover, to alleviate some of the difficulty that the aforementioned example presents, we assume further that our initial point $P\in\mathbb{P}^N$ is well-behaved in the following sense. 
\begin{definition} For $P\in\mathbb{P}^N(\overline{\mathbb{Q}})_S$, we say that $P$ is \emph{almost surely wandering} for $(S,\nu)$ if the set of sequences  
\[\{\gamma\in\Phi_S:\,\Orb_\gamma(P)\; \text{is finite}\}\]
has $\bar{\nu}$-measure $0$.     
\end{definition}
\begin{remark} If $S$ is a set of morphisms of degree at least $2$, then the set of points that fail to be almost surely wandering has bounded height; see \cite[Theorem 1.1]{Kawaguchi}. In particular over a fixed number field, there are at most finitely many points that can fail to be almost surely wandering.     
\end{remark}  
We illustrate the difficulty of tracking the height growth rate in orbits when the initial point fails to be almost surely wandering.   
\begin{example}{\label{eg:halffinite}} Let $S=\{\phi_1,\phi_2\}=\{2x^2,x^2+x\}$, let $\nu(2x^2)=1/2=\nu(x^2+x)$, and let $P=-1$. Then it is straightforward to check that 
\[\bar{\nu}\big(\{\gamma\in\Phi_S\,:\, \text{$\Orb_\gamma(P)$ is finite}\}\big)=1/2.\]
Hence, $P$ is not almost surely wandering for $(S,\nu)$. Moreover, only ``half" of all orbits have a chance to reach the upper bound in Theorem \ref{thm:arith<dyn}. The obstruction here is that $\phi_2(P)=0$ has finite orbit for every $\gamma\in\Phi_S$. In Theorem \ref{thm:a.s} below, we show that this type of obstruction completely characterizes how an initial point can fail to be almost surely wandering.        
\end{example}
With these notions in place, we are ready to define the arithmetic degree of an orbit; c.f.  \cite{KawaguchiSilverman,SilvermanPN}. Specifically, since one expects (generically) that $h(\gamma_n(P))$ exhibits exponential growth, we make the following definition.   
\begin{definition} Let $\gamma\in\Phi_S$ and let $P\in\mathbb{P}^N(\overline{\mathbb{Q}})_S$. Then we define the \emph{arithmetic degree} of the pair $(\gamma,P)$ to be the limit, 
\[ \alpha_\gamma(P):=\lim_{n\rightarrow\infty} h(\gamma_n(P))^{1/n},\]
provided that the limit exists; here $h:\mathbb{P}^N(\overline{\mathbb{Q}})\rightarrow\mathbb{R}$ is the (absolute) Weil height \cite[\S3.1]{SilvDyn}. 
\end{definition} 
By analogy with the well-known case of a single morphism, we will prove that the arithmetic degree of a pair $(\gamma,P)$ equals the dynamical degree $\delta_{S,\nu}$ defined in Theorem \ref{thm:dyndeg} with $\bar{\nu}$-probability one. The proof of this result uses the notion of canonical heights constructed in \cite{Kawaguchi} and revisited in \cite{stochastic}. However, in order to define these canonical heights we must stipulate that $S$ have further properties, which we now discuss. 
\begin{remark} In Section \ref{sec:arithdeg}, we use canonical heights for infinite sequences of endomorphisms on $\mathbb{P}^N$. However, it is worth mentioning that the constructions in \cite{stochastic,Kawaguchi} that we use in Section \ref{sec:arithdeg} work for polarizable sets of endomorphisms on more general projective varieties.   
\end{remark} 
It is well known that if $\phi:\mathbb{P}^N(\overline{\mathbb{Q}})\rightarrow\mathbb{P}^N(\overline{\mathbb{Q}})$ is a morphism of degree $d_\phi$, then 
\begin{equation}\label{functoriality}
h(\phi(P))=d_\phi h(P)+O_{\phi}(1)\;\;\;\text{for all $P\in\mathbb{P}^N(\overline{\mathbb{Q}})$;} \vspace{.1cm} 
\end{equation}  
see, for instance, \cite[Theorem 3.11]{SilvDyn}. With this in mind, we let 
\begin{equation}{\label{htconstant}} 
C(\phi):=\sup_{P \in \mathbb{P}^N(\bar{\mathbb{Q}})} \Big\vert h(\phi(P))-d_\phi h(P)\Big\vert 
\end{equation} 
be the smallest constant needed for the bound in (\ref{functoriality}). Then, in order to generalize the construction of canonical heights for a single map to a collection of maps (equivalently, from constant sequences to arbitrary sequences), we define the following fundamental notion.  
\begin{definition} 
A set of endomorphisms $S$ on $\mathbb{P}^N$ is called \emph{height controlled} if the following properties hold: \vspace{.1cm} 
\begin{enumerate} 
\item $d_\phi\geq2$ for all $\phi\in S$. \vspace{.1cm}
\item $\sup_{\phi\in S}\, C(\phi)$ is finite. \vspace{.1cm}
\end{enumerate} 
\end{definition} 

\begin{example}[Finite collections] Let $S=\{\phi_1,\phi_2,\dots,\phi_s\}$ be any finite collection of endomorphisms on $\mathbb{P}^N$ of degree at least two. Then $S$ is height controlled.  
\end{example}  
\begin{example}[Twisted power maps]{\label{unicrit}} For any non-constant, finite set of rational maps $S$ on $\mathbb{P}^1$, the set $\bar{S}=\{\phi\circ x^d\,: \phi\in S,\, d\geq2\}$ is height controlled (and infinite). 
\end{example}
\begin{example} Let $\mathcal{U}$ be the set of roots of unity in $\overline{\mathbb{Q}}$. Then the set $S=\{x^2+u\,:\, u\in \mathcal{U}\}$ is a height controlled collection of maps on $\mathbb{P}^1$. Moreover, it is worth pointing out that $S$ has a corresponding probability measure given by embedding $\mathcal{U}$ in the unit circle (in $\mathbb{C}$) and then taking the Haar measure on the circle.  
\end{example} 
We are now ready to relate arithmetic and dynamical degrees for height controlled sets of morphisms. 
\begin{theorem}\label{thm:arithdeg} Let $S\subseteq\End(\mathbb{P}^N)$ be a set of endomorphisms on $\mathbb{P}^N$, let $\nu$ be a probability measure on $S$, and let $(\Phi_S,\bar{\nu})$ be the space of i.i.d sequences of elements of $S$ distributed according to $\nu$. Moreover, assume that $S$ has the following properties: \vspace{.1cm} 
\begin{enumerate} 
\item[\textup{(1)}] $S$ is height controlled. \vspace{.15cm} 
\item[\textup{(2)}] $\mathbb{E}_1[\log\deg(\phi)]$ exists. \vspace{.1cm} 
\end{enumerate} 
Then if $P\in\mathbb{P}^N(\overline{\mathbb{Q}})$ is an almost surely wandering point for $(S,\nu)$, we have that
\[\alpha_\gamma(P)=\delta_{S,\nu}\]
for $\bar{\nu}$-almost every $\gamma\in\Phi_S$.  
\end{theorem}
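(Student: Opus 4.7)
The plan is to adapt the Call--Silverman strategy to the random setting. For each $\gamma \in \Phi_S$ I would construct a canonical height
\[
\hat h_\gamma(P) := \lim_{n\to\infty} \frac{h(\gamma_n(P))}{\deg(\gamma_n)},
\]
as in \cite{Kawaguchi, stochastic}, then show it is positive for $\bar\nu$-almost every $\gamma$, and finally read off the arithmetic degree from Theorem \ref{thm:dyndeg}.

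For the first step, existence of the limit comes from the telescoping estimate
\[
\left|\frac{h(\gamma_{n+1}(P))}{\deg(\gamma_{n+1})} - \frac{h(\gamma_n(P))}{\deg(\gamma_n)}\right| \le \frac{C(\theta_{n+1})}{\deg(\gamma_{n+1})},
\]
which the height-controlled hypothesis dominates by $C_S\cdot 2^{-(n+1)}$, where $C_S := \sup_{\phi \in S} C(\phi) < \infty$ and $\deg(\gamma_{n+1}) \ge 2^{n+1}$ because each $d_\phi \ge 2$. The same telescoping applied to the shifted sequence $\sigma^n\gamma := (\theta_{n+1},\theta_{n+2},\dots)$ based at $\gamma_n(P)$ gives the shift identity $\hat h_{\sigma^n\gamma}(\gamma_n(P)) = \deg(\gamma_n)\,\hat h_\gamma(P)$ together with the uniform comparison
\[
\bigl|h(\gamma_n(P)) - \deg(\gamma_n)\,\hat h_\gamma(P)\bigr| \le C_S \qquad \text{for all } n \ge 0.
\]

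For the second step, this comparison shows that $\hat h_\gamma(P) = 0$ forces $h(\gamma_n(P)) \le C_S$ for every $n$, so Northcott's theorem (applied over a number field of definition) makes $\Orb_\gamma(P)$ finite; the converse is immediate. Thus the vanishing set of $\hat h_\gamma(P)$ coincides with $\{\gamma \in \Phi_S : \Orb_\gamma(P) \text{ is finite}\}$, which has $\bar\nu$-measure zero by the almost surely wandering hypothesis. For the third step, for any $\gamma$ with $\hat h_\gamma(P) > 0$ the comparison yields
\[
\bigl(\deg(\gamma_n)\,\hat h_\gamma(P) - C_S\bigr)^{1/n} \le h(\gamma_n(P))^{1/n} \le \bigl(\deg(\gamma_n)\,\hat h_\gamma(P) + C_S\bigr)^{1/n}.
\]
Since $\hat h_\gamma(P)$ is a positive constant and $\deg(\gamma_n)\to\infty$, both outer quantities have the same limit as $\deg(\gamma_n)^{1/n}$, which by Theorem \ref{thm:dyndeg} equals $\delta_{S,\nu}$ almost surely. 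Combining the last two steps gives $\alpha_\gamma(P) = \delta_{S,\nu}$ for $\bar\nu$-almost every $\gamma$.

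The main obstacle is the vanishing step: the whole argument hinges on translating ``$\hat h_\gamma(P) > 0$'' into ``$\Orb_\gamma(P)$ infinite'' and conversely, across an uncountable family of sequences. The height-controlled hypothesis is precisely what keeps the error term $O(1)$ uniformly in $n$ and $\gamma$, enabling the Northcott argument; the almost-surely wandering assumption is precisely what promotes pointwise positivity of $\hat h_\gamma(P)$ to a full-measure statement. Handling possibly infinite $S$ with maps of varying fields of definition also requires a uniform version of Northcott, but the height-controlled condition again provides the needed uniform bounds.
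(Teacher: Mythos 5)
Your proposal takes essentially the same route as the paper: reduce to positivity of the canonical height $\hat{h}_\gamma(P)$ for $\bar\nu$-almost every $\gamma$, then sandwich $h(\gamma_n(P))^{1/n}$ between expressions that collapse to $\deg(\gamma_n)^{1/n}$ in the limit and invoke Theorem \ref{thm:dyndeg}. The paper delegates both the construction of $\hat{h}_\gamma$ and the step ``almost surely wandering $\Rightarrow \hat{h}_\gamma(P)>0$ a.s.'' to \cite[Theorem 1.1]{Kawaguchi} and \cite[Remark 2.3]{stochastic}, whereas you sketch these directly; your uniform comparison $|h(\gamma_n(P))-\deg(\gamma_n)\hat{h}_\gamma(P)|\le C_S$ is a slightly sharper form of the $\epsilon$-closeness estimate the paper uses, and the sandwich at the end is the same computation.

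One caveat on your closing paragraph: the claim that the height-controlled hypothesis ``again provides the needed uniform bounds'' for a Northcott argument over possibly infinite $S$ is not correct as stated. Height control gives a uniform bound on the height error terms, but Northcott requires \emph{both} bounded height and bounded degree of the field of definition of the points in the orbit; nothing in the height-controlled condition constrains the fields over which the maps in $S$ are defined (compare the example $S=\{x^2+u: u \text{ a root of unity}\}$ in the paper, which is height controlled but not defined over a fixed number field). The equivalence $\hat{h}_\gamma(P)=0\iff\Orb_\gamma(P)$ finite really does need the orbit to lie in a fixed number field, and this is handled in the cited sources under their own hypotheses rather than following from height control alone. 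This does not affect the logical skeleton of your argument, which matches the paper's, but the justification offered for that step is incomplete.
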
   
Tying together Theorems \ref{thm:morphdyndeg} and \ref{thm:arithdeg} above, we prove that if $(S,\nu)$ is a height-controlled collection of morphisms (a sufficient condition to define suitable canonical heights) and $P$ is an almost surely wandering point for $(S,\nu)$, then the height of most points in almost all orbits grow in a uniform way depending on the dynamical degree. The result below may be viewed as a generalization of \cite[Proposition 3]{KawaguchiSilverman}.         
\begin{corollary}\label{cor:arithdeg} Let $S\subseteq\End(\mathbb{P}^N)$ be a set of endomorphisms on $\mathbb{P}^N$, let $\nu$ be a probability measure on $S$, and let $(\Phi_S,\bar{\nu})$ be the space of i.i.d sequences of elements of $S$ distributed according to $\nu$. Moreover, assume that $S$ has the following properties: \vspace{.1cm} 
\begin{enumerate} 
\item[\textup{(1)}] $S$ is height controlled. \vspace{.1cm} 
\item[\textup{(2)}] $\mathbb{E}_1[\log\deg(\phi)]$ exists. \vspace{.1cm} 
\end{enumerate}  
Then if $P\in\mathbb{P}^N(\overline{\mathbb{Q}})$ is an almost surely wandering point for $(S,\nu)$, we have that \vspace{.1cm} 
\[\lim_{B\rightarrow\infty}\frac{\#\{n:h(\gamma_n(P))\leq B\}}{\log(B)}=\frac{1}{\delta_{S,\nu}}\]
for $\bar{\nu}$-almost every $\gamma\in\Phi_S$. In particular, if $S$ is finite, then \vspace{.1cm} 
\[\lim_{B\rightarrow\infty}\frac{\#\{n:h(\gamma_n(P))\leq B\}}{\log(B)}=\prod_{\phi\in S}\deg(\phi)^{-\nu(\phi)},\]  
for $\bar{\nu}$-almost every $\gamma\in\Phi_S$.  
\end{corollary}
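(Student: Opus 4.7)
The plan is to derive Corollary~\ref{cor:arithdeg} as an immediate consequence of Theorem~\ref{thm:arithdeg} via a standard counting argument of the kind used in \cite[Proposition 3]{KawaguchiSilverman}.

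First, I would invoke Theorem~\ref{thm:arithdeg} to produce a $\bar\nu$-conull set $\Omega\subseteq\Phi_S$ on which $\alpha_\gamma(P)$ exists and equals $\delta_{S,\nu}$; that is,
\[\lim_{n\to\infty}\frac{\log h(\gamma_n(P))}{n}=\log\delta_{S,\nu}\qquad\text{for every }\gamma\in\Omega.\]
Since $S$ is height controlled we have $\delta_{S,\nu}\geq 2$, so this limit is strictly positive and, in particular, $h(\gamma_n(P))\to\infty$ along every $\gamma\in\Omega$, making $\{n:h(\gamma_n(P))\leq B\}$ finite for each $B$.

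Next, I fix $\gamma\in\Omega$, write $\delta:=\delta_{S,\nu}$, and extract from the displayed limit the two-sided bound: for every $\epsilon\in(0,\log\delta)$ there is $N=N(\epsilon,\gamma)$ such that
\[n(\log\delta-\epsilon)\;\leq\;\log h(\gamma_n(P))\;\leq\;n(\log\delta+\epsilon)\qquad\text{for all }n\geq N.\]
Inverting the condition $h(\gamma_n(P))\leq B$ by taking logarithms, the counting function is squeezed as
\[\frac{\log B}{\log\delta+\epsilon}-O_\gamma(1)\;\leq\;\#\{n:h(\gamma_n(P))\leq B\}\;\leq\;\frac{\log B}{\log\delta-\epsilon}+O_\gamma(1),\]
where the error absorbs the finitely many indices $n<N$. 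Dividing by $\log B$, sending $B\to\infty$, and then letting $\epsilon\to 0$ produces the asserted limit on a $\bar\nu$-conull set. For the finite-set form, I would simply substitute the explicit value $\delta_{S,\nu}=\prod_{\phi\in S}\deg(\phi)^{\nu(\phi)}$ supplied by Theorem~\ref{thm:morphdyndeg}.

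The substantive work is already concentrated in Theorem~\ref{thm:arithdeg}, so nothing conceptually new is required and no real obstacle is anticipated. The only care needed is to take the intersection of the $\bar\nu$-conull set provided by that theorem with the $\bar\nu$-conull set of sequences with infinite orbit (which is furnished by the almost-sure-wandering hypothesis on $P$), and to use height control to ensure $\delta_{S,\nu}>1$, so that the squeeze above produces a genuine $\log B$ asymptotic rather than a degenerate one.
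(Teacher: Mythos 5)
Your proposal is correct and essentially identical to the paper's own proof, which likewise follows \cite[Proposition~3]{KawaguchiSilverman}: invoke Theorem~\ref{thm:arithdeg} to get $\alpha_\gamma(P)=\delta_{S,\nu}$ on a conull set, then sandwich the counting function and let $\epsilon\to 0$. Note that, as your squeeze correctly produces, the limit is $1/\log(\delta_{S,\nu})$ (this is also what the paper's own proof derives), so the value $1/\delta_{S,\nu}$ written in the corollary's statement is a typographical error; your extra remark that $\delta_{S,\nu}\geq 2$ (hence $\log\delta_{S,\nu}>0$) is a harmless but welcome observation that the paper leaves implicit.
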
 
\begin{example} Let $c\in\overline{\mathbb{Q}}$ be any fixed constant, let $S=\big\{x^d+c: d\geq2\big\}$, and let $\nu$ be the probability measure on $S$ induced by $\nu(x^d+c)=\frac{1}{e(d-2)!}$. Then $S$ is a height controlled collection of morphisms on $\mathbb{P}^1$ and \[\mathbb{E}_1[\log\deg(\phi)]=\sum_{d=2}^\infty\frac{\log d}{e(d-2)!}\]
is a convergent series. Therefore, the dynamical degree of $(S,\nu)$ exists; in fact, $\delta_{S,\nu}\approx2.85052$. In particular, Corollary \ref{cor:arithdeg} implies that for all but finitely many points $P\in\mathbb{P}^1(\mathbb{Q}(c))$ and $\bar{\nu}$-almost every sequence $\gamma\in\Phi_S$, the following estimate holds: 
\[\lim_{B\rightarrow\infty}\frac{\#\{n:h(\gamma_n(P))\leq B\}}{\log B}\approx\frac{1}{2.85052}\]  
\end{example} 
Finally, we give an alternative characterization of almost surely wandering points using the expected canonical height function $\mathbb{E}_{\bar{\nu}}[\hat{h}]:\mathbb{P}^N\rightarrow\mathbb{R}_{\geq0}$ constructed in \cite[Theorem 1.2]{stochastic} and a characterization of its kernel \cite[Corllary 1.4]{stochastic}); see \cite{stochastic} or Section \ref{sec:aswandering} for more details. To state this recharacterization, we define the (total) $S$-orbit of a point $Q\in\mathbb{P}^N(\overline{\mathbb{Q}})_S$ to be the union of all possible orbits defined in (\ref{orbitdef}), namely 
\begin{equation}\label{Sorbit}  
\Orb_S(Q):=\bigcup_{\gamma\in\Phi_S}\Orb_\gamma(Q).
\end{equation} 
In particular, we say that $Q$ has finite $S$-orbit if $\Orb_S(Q)$ is a finite set.       
\begin{theorem}\label{thm:a.s} Let $S$ be a finite set of endomorphisms on $\mathbb{P}^N$ all of degree at least $2$, let $\nu$ be a probability measure on $S$, and let $(\Phi_S,\bar{\nu})$ be the space of i.i.d sequences of elements of $S$ distributed according to $\nu$. Then $P\in\mathbb{P}^N(\overline{\mathbb{Q}})$ is almost surely wandering for $(S,\nu)$ if and only if $\mathbb{E}_{\bar{\nu}}[\hat{h}](Q)>0$ for all $Q\in\Orb_S(P)$.  
\end{theorem}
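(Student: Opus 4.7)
The plan is to establish the biconditional via two contrapositives, using the characterization \cite[Corollary 1.4]{stochastic} that $\mathbb{E}_{\bar{\nu}}[\hat{h}](Q)=0$ if and only if the total $S$-orbit $\Orb_S(Q)$ is finite. I will work throughout under the harmless reduction that $\nu$ is supported on all of $S$, since otherwise one may replace $S$ by $\mathrm{supp}(\nu)$ without altering $\bar{\nu}$-almost every orbit.

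The easy direction is $(\Leftarrow)$, which I prove by contrapositive. If some $Q\in\Orb_S(P)$ has $\mathbb{E}_{\bar{\nu}}[\hat{h}](Q)=0$, then $\Orb_S(Q)$ is finite. Choosing a word $(\phi_1,\dots,\phi_k)\in S^k$ with $\phi_k\circ\cdots\circ\phi_1(P)=Q$, the cylinder event of sequences beginning with this word has positive $\bar{\nu}$-probability $\prod_i\nu(\phi_i)$, and on this event $\gamma_n(P)\in\Orb_S(Q)$ for every $n\geq k$, forcing $\Orb_\gamma(P)$ to be finite. Hence $P$ is not almost surely wandering.

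For the harder direction $(\Rightarrow)$, again by contrapositive, assume $\bar{\nu}(E)>0$ for $E:=\{\gamma:\Orb_\gamma(P)\text{ is finite}\}$ and construct $Q^*\in\Orb_S(P)$ with $\mathbb{E}_{\bar{\nu}}[\hat{h}](Q^*)=0$. First, since each $\Orb_\gamma(P)$ on $E$ is a finite subset of the countable set $\Orb_S(P)$, countable additivity over the countable collection of such subsets yields a fixed finite $T\subseteq\Orb_S(P)$ with $\bar{\nu}(\Orb_\gamma(P)\subseteq T)>0$; in particular $P\in T$. Next, define $h:\mathbb{P}^N(\overline{\mathbb{Q}})\to[0,1]$ by $h(Q):=\bar{\nu}(\Orb_\gamma(Q)\subseteq T)$ for $Q\in T$ and $h(Q):=0$ otherwise. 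Conditioning on the first coordinate $\theta_1$ and using shift-invariance of the product measure yields, for every $Q\in T$, the averaging identity
\[
h(Q)\;=\;\sum_{\phi\in S}\nu(\phi)\,h(\phi(Q)),
\]
which is consistent with the zero-extension because $\phi(Q)\notin T$ forces $\bar{\nu}(\Orb_\gamma(\phi(Q))\subseteq T)=0$.

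The remaining step is a discrete maximum principle on the finite set $T$. Since $h(P)>0$, the value $m:=\max_{Q\in T}h(Q)$ is strictly positive; fix a maximizer $Q^*\in T$. The identity $m=\sum_\phi\nu(\phi)h(\phi(Q^*))$ together with $h\leq m$, $\sum_\phi\nu(\phi)=1$, and $\nu(\phi)>0$ for every $\phi\in S$ forces $h(\phi(Q^*))=m$ for all $\phi\in S$. Because $m>0$ and $h$ vanishes outside $T$, each $\phi(Q^*)$ lies in $T$ and is again a maximizer, and iterating shows $\Orb_S(Q^*)\subseteq T$ is finite; \cite[Corollary 1.4]{stochastic} then gives $\mathbb{E}_{\bar{\nu}}[\hat{h}](Q^*)=0$, completing the contrapositive since $Q^*\in T\subseteq\Orb_S(P)$. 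The main obstacle will be this boundary-behavior argument: one must carefully justify that the zero-extension of $h$ interacts cleanly with the averaging so that strict positivity at the maximum propagates into $T$ rather than leaking out of it, and the full-support reduction is exactly what ensures the iteration captures the entire $S$-semigroup orbit of $Q^*$ rather than merely the $\mathrm{supp}(\nu)$-semigroup orbit.
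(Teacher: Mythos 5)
Your proof is correct, and for one of the two implications it takes a genuinely different route from the paper. (Cosmetic note: your direction labels are swapped --- the cylinder argument in your first paragraph is the contrapositive of ``almost surely wandering $\Rightarrow$ all orbit points have positive expected canonical height,'' while the maximum-principle argument proves the contrapositive of the converse --- but the mathematics on both sides is sound.) Your first step coincides with the paper's. For the other implication the paper argues directly: from the positivity hypothesis it builds an escape function $g\in M_S$ via Lemma~\ref{lem:escape}, using the auxiliary height Lemmas~\ref{lem:stablecomplement} and~\ref{lem:hatzero} to verify the escape hypotheses, and then invokes the Kolmogorov/``infinite monkey'' observation that almost every $\gamma$ contains the block $g$, hence escapes the small-height set $F_{P,S}$ and has $\hat h_\gamma(P)>0$. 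You instead argue contrapositively via a discrete maximum principle: you extract a finite trap $T\subseteq\Orb_S(P)$ with positive hitting probability by decomposing $\{\Orb_\gamma(P)\ \text{finite}\}$ over the countably many finite subsets of the countable set $\Orb_S(P)$, observe that $u(Q):=\bar\nu(\Orb_\gamma(Q)\subseteq T)$ is $\nu$-harmonic on $T$ with zero boundary values, and propagate the positive maximum $m>0$ through the $S$-action to force $\Orb_S(Q^*)\subseteq T$ for a maximizer $Q^*$; Corollary~\ref{cor:finiteorbit} then gives $\mathbb{E}_{\bar\nu}[\hat h](Q^*)=0$. Your route needs only Corollary~\ref{cor:finiteorbit} and shift-invariance of the product measure, dispensing with the height-telescoping lemmas and the escape/monkey machinery, which is a genuine simplification.

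One shared caveat: both proofs require $\nu(\phi)>0$ for every $\phi\in S$. The paper invokes this mid-proof (``here we use that $\nu$ is strictly positive'') without it appearing in the hypotheses of Theorem~\ref{thm:a.s}, and without it the statement is false --- take $\nu$ concentrated on a single $\phi_1$ for which $P$ wanders while another $\phi_2\in S$ carries $P$ to a fixed point of $\phi_1$. Your opening ``harmless reduction'' to $S=\mathrm{supp}(\nu)$ is not quite a reduction, because the conclusion quantifies over $\Orb_S(P)$, which depends on $S$ and not only on $\mathrm{supp}(\nu)$; the clean fix is simply to add strict positivity as a hypothesis, as in Corollary~\ref{cor:finiteorbit}.
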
  
\begin{remark} Equivalently, Theorem \ref{thm:a.s} states: $P\in \mathbb{P}^N(\overline{\mathbb{Q}})$ is \emph{not} almost surely wandering for $(S,\nu)$ if and only if there exists $Q\in\Orb_S(P)$ such that $Q$ has finite $S$-orbit.
\end{remark}   
\begin{remark} It is possible that some of the results in this paper hold for more general projective varieties, perhaps by using the submultiplicativity of degrees (up to a constant) recently established in \cite{submult}, Kingman's Ergodic Theorem \cite{kingman}, and techniques for estimating heights developed in \cite{KawaguchiSilverman}. This is the subject of current work by the author.        
\end{remark} 
\textbf{Acknowledgements:} We thank Vivian Olsiewski Healey, Shu Kawaguchi, and Joseph Silverman for discussions related to the work in this paper. 
\section{Dynamical Degrees}
When $S=\{\phi\}$ is a single map, a crucial tool for establishing the convergence of the limit defining the dynamical degree is Fekete's lemma (see the proof of \cite[Proposition 7]{SilvermanPN}), which states that if $a_n$ is a subadditive sequence of non-negative real numbers, then $\lim a_n/n$ exists. In particular, the following landmark theorem due to Kingman \cite{kingman} may be viewed as a random version of Fekete's lemma. In what follows, the expected value of a random variable $f$ on a probability space $\Omega$ is the integral $\int_\Omega f$.      
\begin{theorem}[Kingman's Subadditive Ergodic Theorem]\label{thm:kingman} Let $T$ be a measure preserving transformation on a probability space $(\Omega,\Sigma, \mu)$, and let $(g_n)_{n\geq1}$ be a sequence of $L^1$ random variables that satisfy the subadditivity relation  
\begin{equation}\label{subadd} 
g_{m+n}\leq g_n+g_m\circ T_n   
\end{equation} 
for all $n,m\geq1$. Then there exists a $T$-invariant function $g$ such that  
\[\lim_{n\rightarrow\infty}\frac{g_n(x)}{n}=g(x)\]  
for $\mu$-almost every $x\in\Omega$. Moreover, if $T$ is ergodic, then $g$ is constant and
\[\lim_{n\rightarrow\infty}\frac{g_n(x)}{n}=\lim_{n\rightarrow\infty}\frac{\mathbb{E}[g_n]}{n} =\inf_{n\geq1}\frac{\mathbb{E}[g_n]}{n}.\]    
for $\mu$-almost every $x\in\Omega$
\end{theorem}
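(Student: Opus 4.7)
The plan is to prove Kingman's theorem by the standard four-stage argument, reducing to the ergodic case at the end. First, taking expectations in the subadditivity relation and using that $T$ preserves $\mu$ shows that the deterministic sequence $a_n := \mathbb{E}[g_n]$ is itself subadditive, so Fekete's lemma (for real sequences) gives
\[
\gamma \;:=\; \lim_{n\to\infty} \frac{a_n}{n} \;=\; \inf_{n\geq 1} \frac{a_n}{n}.
\]
This identifies the candidate for the almost-sure pointwise limit in the ergodic case and already proves one of the two displayed equalities in the theorem statement.

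Second, set $\bar g(x) = \limsup_{n} g_n(x)/n$ and $\underline g(x) = \liminf_{n} g_n(x)/n$. The specialization $g_{n+1}(x) \leq g_1(x) + g_n(Tx)$ of the subadditivity relation, together with $g_1 \in L^1$, yields $\bar g(Tx) \geq \bar g(x)$ and $\underline g(Tx) \geq \underline g(x)$ almost everywhere, and measure-preservingness then upgrades these to equalities; this establishes $T$-invariance of $\bar g$ and $\underline g$. Iterating the same one-step bound produces $g_n \leq \sum_{i=0}^{n-1} g_1 \circ T^i$, so Birkhoff's ergodic theorem applied to $g_1$ dominates $g_n/n$ from above by a sequence that converges almost everywhere to an $L^1$ function; this gives integrability of $\bar g$ and, via Fatou's lemma applied to $\mathbb{E}[g_1] - g_n/n$, the one-sided inequality $\int \bar g\, d\mu \leq \gamma$.

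The main obstacle --- the step that goes beyond Birkhoff's theorem --- is the reverse inequality $\int \underline g\, d\mu \geq \gamma$. For this I would use a stopping-time / block-decomposition argument: fix $\epsilon>0$ and a large $N$, and for each $x$ let $\tau(x)$ be the smallest $k \leq N$ with $g_k(x)/k \leq \underline g(x) + \epsilon$, truncating on the (small, for $N$ large) exceptional set where no such $k$ exists. Cutting a long orbit into successive blocks of lengths $\tau(T^{s_1}x),\tau(T^{s_2}x),\ldots$ together with a remainder of bounded length, the subadditivity relation telescopes to
\[
g_n(x) \;\leq\; \sum_j g_{\tau(T^{s_j}x)}(T^{s_j}x) \;+\; (\text{tail}),
\]
and averaging in $n$, invoking Birkhoff on the essentially bounded function $g_\tau(\cdot)/\tau(\cdot)$, and integrating in $x$ delivers $\gamma \leq \int \underline g\, d\mu + O(\epsilon)$. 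Letting $\epsilon \downarrow 0$ and combining with $\int \bar g\, d\mu \leq \gamma$ and the pointwise inequality $\underline g \leq \bar g$ forces $\underline g = \bar g$ almost everywhere; hence the limit $g(x) := \lim_{n\to\infty} g_n(x)/n$ exists $\mu$-a.e., is $T$-invariant, and satisfies $\int g\, d\mu = \gamma$. In the ergodic case, $T$-invariance forces $g \equiv \gamma$ a.e., which delivers the remaining equalities in the theorem; the non-ergodic conclusion is already supplied by the preceding sandwich argument without invoking ergodicity, and a cleaner non-ergodic statement can alternatively be obtained by applying the ergodic case on each component of the ergodic decomposition of $\mu$.
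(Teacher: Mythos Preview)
The paper does not give its own proof of this statement: Kingman's Subadditive Ergodic Theorem is quoted as a classical result, and the paper simply refers the reader to Kingman's original article, to Durrett's textbook, and to notes of Lalley and of Steele for a proof. So there is no ``paper's proof'' to compare against.

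That said, your outline is essentially the standard modern argument (close to Steele's short proof): Fekete on the expectations to identify the constant, sub-invariance of $\bar g$ and $\underline g$ from the one-step subadditivity, Birkhoff applied to $g_1$ to dominate from above and get $\int \bar g\,d\mu\le\gamma$, and a block/stopping-time decomposition to get $\int \underline g\,d\mu\ge\gamma$. The sandwich then forces $\underline g=\bar g$ a.e. Two points to be careful about if you flesh this out: (i) the argument that sub-invariance $\bar g\le \bar g\circ T$ upgrades to equality uses integrability of $\bar g$ (or at least of $\bar g^{+}$), which you only establish afterwards via Birkhoff on $g_1$, so the logical order needs a small rearrangement; and (ii) as stated, the theorem does not assume $\inf_n \mathbb{E}[g_n]/n>-\infty$, so you should either add that hypothesis or indicate how the argument is modified when $\gamma=-\infty$ (truncate $g_n$ below and pass to the limit). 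These are routine but should be acknowledged; otherwise the sketch is sound.
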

\begin{remark} 
See \cite{kingman} for Kingman's original statement, see \cite[\S7.4]{durrett} for an exposition and a discussion of later improvements, and see \cite{steve,steele} for the version of Kingman's theorem that we use here.   
\end{remark}
To apply Kingman's Subadditive Ergodic Theorem to dynamical degrees, we use the following well known example of an ergodic, measure preserving transformation. In particular, the lemma below is a simple consequence of Kolmogorov's $0$\,-$1$ law \cite[Theorem 10.6]{ProbabilityText}; for a further discussion, see \cite[Example 7.1.6]{durrett} or \cite[Example 5.5]{steve2} and \cite[Exercise 5.11]{steve2}.    
\begin{lemma}\label{shift} Let $S$ be a set with probability measure $\nu$, and let $(\Phi_S,\bar{\nu})$ be the probability space of i.i.d sequences of elements of $S$ distributed according to $\nu$. Then the shift map, 
\[T\big((\theta_j)_{j\geq1}\big)=(\theta_j)_{j\geq2},\] 
is an ergodic, measure preserving transformation.   
\end{lemma}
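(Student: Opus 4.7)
The plan is to verify the two claims (measure preserving and ergodicity) separately, both leveraging the fact that $\bar{\nu}$ is the product measure, so that the coordinate projections $X_i\big((\theta_j)_{j\geq1}\big):=\theta_i$ form an i.i.d.\ family with law $\nu$. Throughout, I will use the identity $X_i\circ T=X_{i+1}$.

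For the measure-preserving property, I would check the identity $\bar{\nu}(T^{-1}(A))=\bar{\nu}(A)$ on a $\pi$-system generating the product $\sigma$-algebra. For a finite cylinder $A=\{\theta_1\in B_1,\dots,\theta_n\in B_n\}$, we have $T^{-1}(A)=\{\theta_2\in B_1,\dots,\theta_{n+1}\in B_n\}$, and by the definition of the product measure both sets have measure $\prod_{i=1}^n\nu(B_i)$. A standard Dynkin $\pi$-$\lambda$ (or monotone class) argument then extends the equality from cylinders to all measurable subsets of $\Phi_S$.

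For ergodicity, I would invoke Kolmogorov's $0$-$1$ law as hinted at in the remark preceding the lemma. From $X_i\circ T=X_{i+1}$, one sees by induction that $T^{-n}(A)\in\sigma(X_{n+1},X_{n+2},\dots)$ for every measurable $A$ and every $n\geq 1$. If $A$ is $T$-invariant, i.e.\ $T^{-1}(A)=A$, then $A=T^{-n}(A)$ for all $n\geq1$, and hence $A$ lies in the tail $\sigma$-algebra $\mathcal{T}=\bigcap_{n\geq1}\sigma(X_{n+1},X_{n+2},\dots)$. Since the $X_i$ are independent, Kolmogorov's $0$-$1$ law \cite[Theorem 10.6]{ProbabilityText} forces $\bar{\nu}(A)\in\{0,1\}$, which is exactly the definition of ergodicity of $T$.

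The only mild subtlety is that in ergodic theory one often weakens $T$-invariance to the almost-sure version $\bar{\nu}(A\triangle T^{-1}(A))=0$. This is easily handled: one replaces such an $A$ by a strictly invariant modification, for instance $A^*=\bigcup_{k\geq0}\bigcap_{n\geq k}T^{-n}(A)$, which satisfies $T^{-1}(A^*)=A^*$ and $\bar{\nu}(A\triangle A^*)=0$, and then applies the argument above to $A^*$. Beyond this bookkeeping, the argument is entirely standard and I do not anticipate any genuine obstacle; the content of the lemma is really just a packaging of Kolmogorov's $0$-$1$ law in the language of dynamical systems.
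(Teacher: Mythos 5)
Your proof is correct and takes the same approach the paper indicates: the paper does not spell out a proof but simply notes that the lemma is a direct consequence of Kolmogorov's $0$-$1$ law and points to standard references (\cite[Example 7.1.6]{durrett}, \cite[Example 5.5, Exercise 5.11]{steve2}). You have filled in exactly those details — verifying measure preservation on cylinders via a $\pi$-$\lambda$ argument and showing a $T$-invariant set lies in the tail $\sigma$-algebra, then applying Kolmogorov — so there is no divergence in method, only in level of detail.
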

\begin{proof}[(Proof of Theorem \ref{thm:dyndeg})] For $n\geq1$, consider the random variables $g_n:\Phi_S\rightarrow\mathbb{R}_{\geq0}$ given by 
\[g_n(\gamma):=\log\deg(\gamma_n);\] 
note that $g_n$ is non-negative since $S$ is a collection of dominant maps. We will show that the sequence $(g_n)_{n\geq1}$ satisfies the hypothesis of Kingman's Subadditive Ergodic Theorem. First note that each $g_n$ is $\bar{\nu}$-measurable since $g_n$ factors through the finite product $\Phi_{S,n}$, and every function on $\Phi_{S,n}:=S^n$, a countable set, is measurable; see Remark \ref{discrete} above and \cite[Theorem 10.4]{ProbabilityText} for standard facts about product measures. On the other hand, define $f_i:\Phi_S\rightarrow\mathbb{R}_{\geq0}$ given by $f_i(\gamma)=\log\deg(\theta_i)$ for $\gamma=(\theta_s)_{s=1}^\infty$. Then $g_n\leq\sum_{i=1}^nf_i$, since 
\begin{equation}\label{degbd} 
\deg(F\circ G)\leq\deg(F)\deg(G)\;\;\;\;\text{for any}\; F,G\in \Dom(\mathbb{P}^N).
\end{equation} 
In particular,   
\[\mathbb{E}[g_n]\leq\sum_{i=1}^n\mathbb{E}[f_i]=n\,\mathbb{E}[f_1]:=n\,\mathbb{E}_1[\log\deg(\phi)];\] 
here we use that $\Phi_S$ consists of i.i.d sequences. In particular, each $g_n$ is an $L^1$ function since $\mathbb{E}_1[\log\deg(\phi)]$ is bounded by assumption. Now we check the subadditivity relation in (\ref{subadd}), a simple consequence of (\ref{degbd}). Let $n,m>0$, let $\gamma=(\theta_s)_{s=1}^\infty$, and let $T$ be the shift map on $\Phi_S$. Then we compute that \vspace{.25cm}  
\begin{equation*}
\begin{split} 
g_{n+m}(\gamma)=\log\deg(\theta_{m+n}\circ\dots\circ\theta_1)&\leq\log\deg(\theta_{m+n}\circ\dots\circ\theta_{n+1})+\log\deg(\theta_n\circ\dots\circ\theta_1)\\[3pt] 
&=g_m(T^n(\gamma))+g_n(\gamma), \vspace{.25cm}
\end{split} 
\end{equation*}    
by (\ref{degbd}). In particular, Theorem \ref{thm:kingman} and Lemma \ref{shift} together imply that \vspace{.2cm} 
\begin{equation}\label{kinglim} 
\lim_{n\rightarrow\infty}\log\deg(\gamma_n)^{1/n}=\lim_{n\rightarrow\infty}\frac{g_n(\gamma)}{n}=\lim_{n\rightarrow\infty}\frac{\mathbb{E}[g_n]}{n}=\inf_{n\geq1}\frac{\mathbb{E}[g_n]}{n}=\inf_{n\geq1}\frac{\mathbb{E}_n[\log\deg(\gamma_n)]}{n} \vspace{.2cm}
\end{equation} 
for $\bar{\nu}$-almost every $\gamma\in\Phi_S$. Moreover, applying the exponential map to this equality and exchanging $\exp$ with the limit (justified, by continuity) gives the desire expression for $\delta_{S,\nu}$. 
\end{proof}  
It is worth mentioning that the case of morphisms in Theorem \ref{thm:morphdyndeg} follows from an older ergodic theorem due to Birkhoff; see, for instance, \cite[Theorem 7.2.1]{durrett} or \cite[Theorem 5.12]{steve2}. 
\begin{theorem}[Birkhoff's Ergodic Theorem]\label{birk} If $T$ is an ergodic, measure preserving transformation on a probability space $(\Omega,\Sigma, \mu)$, then for every random variable $f\in L^1(\Omega)$,
\begin{equation}\label{birkhoff} \lim_{n\rightarrow\infty} \frac{1}{n}\sum_{j=0}^{n-1} f\circ T^j(x)=\mathbb{E}[f].  
\end{equation}    
for $\mu$-almost every $x\in\Omega$. 
\end{theorem}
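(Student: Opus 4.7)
The plan is to give the standard maximal-ergodic-theorem proof. First I would reduce to the case $\mathbb{E}[f]=0$: write $f=(f-\mathbb{E}[f])+\mathbb{E}[f]$, note that the constant $\mathbb{E}[f]$ contributes exactly $\mathbb{E}[f]$ to every Birkhoff average $\frac{1}{n}\sum_{j=0}^{n-1}f\circ T^j(x)$, and so it suffices to show that the averages of a mean-zero $L^1$ function tend to $0$ almost surely. Set $S_nf=\sum_{j=0}^{n-1}f\circ T^j$, and let
\[
\overline{f}(x)=\limsup_{n\to\infty}\frac{S_nf(x)}{n},\qquad \underline{f}(x)=\liminf_{n\to\infty}\frac{S_nf(x)}{n}.
\]
Since $S_{n+1}f=f+S_nf\circ T$, the ratios $S_nf/n$ and $(S_nf\circ T)/n$ differ by $O(1/n)$, which shows that both $\overline{f}$ and $\underline{f}$ are $T$-invariant. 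Ergodicity of $T$ then forces each of them to be almost-surely constant, say $\overline{f}\equiv c_+$ and $\underline{f}\equiv c_-$ with $c_-\leq c_+$, and the theorem reduces to proving $c_+\leq 0$ and $c_-\geq 0$.

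The key tool here is the \emph{Maximal Ergodic Theorem}, which I would prove as an intermediate lemma: for $g\in L^1(\Omega)$ and $M_N=\max(0,S_1g,\dots,S_Ng)$, one has $\int_{\{M_N>0\}}g\,d\mu\geq 0$. The proof is a telescoping trick: for each $1\leq n\leq N$, $S_ng=g+S_{n-1}g\circ T\leq g+M_N\circ T$, so $M_N\leq g+M_N\circ T$ on $\{M_N>0\}$; integrating over that set and using $T$-invariance of $\mu$ gives $\int_{\{M_N>0\}}g\geq \int M_N-\int M_N\circ T=0$.

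Now I would apply the maximal theorem to show $c_+\leq 0$. Fix $\varepsilon>0$ and set $A_\varepsilon=\{\overline{f}>\varepsilon\}$; by $T$-invariance of $\overline{f}$ this set is $T$-invariant, and by ergodicity it has measure $0$ or $1$. Applying the maximal ergodic theorem to the function $(f-\varepsilon)\mathbf{1}_{A_\varepsilon}$ on the invariant subsystem $A_\varepsilon$ and letting $N\to\infty$ yields $\int_{A_\varepsilon}(f-\varepsilon)\,d\mu\geq 0$, i.e. $\int_{A_\varepsilon}f\,d\mu\geq \varepsilon\,\mu(A_\varepsilon)$. Under the assumption $\mathbb{E}[f]=0$, the case $\mu(A_\varepsilon)=1$ would give $0\geq \varepsilon>0$, a contradiction; hence $\mu(A_\varepsilon)=0$ for every $\varepsilon>0$, so $\overline{f}\leq 0$ a.e. Replacing $f$ by $-f$ gives $\underline{f}\geq 0$ a.e., and combining with $\underline{f}\leq\overline{f}$ forces $\overline{f}=\underline{f}=0$ almost everywhere, proving the claim in the mean-zero case and hence in general.

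The main obstacle is the maximal ergodic theorem itself: the telescoping inequality $M_N\leq g+M_N\circ T$ on $\{M_N>0\}$ is easy to state but requires one to keep track of where the maximum in $M_N$ is attained and to exploit measure-preservation carefully when integrating $M_N\circ T$. Everything else (the invariance of $\overline{f},\underline{f}$, the use of ergodicity to pass to constants, the $\varepsilon$-argument) is routine once the maximal theorem is in hand.
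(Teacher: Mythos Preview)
Your proposal is the standard maximal-ergodic-theorem proof of Birkhoff's theorem and is correct as outlined; the reduction to mean zero, the $T$-invariance of $\overline f$ and $\underline f$, the Garsia-style proof of the maximal inequality, and the $\varepsilon$-argument on the invariant set $A_\varepsilon$ are all sound. One small point to tighten: when you ``let $N\to\infty$'' to pass from $\int_{\{M_N>0\}}(f-\varepsilon)\,d\mu\ge 0$ to $\int_{A_\varepsilon}(f-\varepsilon)\,d\mu\ge 0$, you are using dominated convergence (with dominator $|f|+\varepsilon\in L^1$) together with the observation that on $A_\varepsilon$ one has $\sup_n S_n(f-\varepsilon)>0$, so $\{M_N>0\}\uparrow A_\varepsilon$; it would be worth making that explicit.

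As for the comparison: the paper does \emph{not} prove Birkhoff's Ergodic Theorem at all. It is stated as a classical result and attributed to the literature (Durrett's textbook and Lalley's lecture notes), then used as a black box in the proofs of Theorem~\ref{thm:morphdyndeg} and Theorem~\ref{thm:arith<dyn}. So there is no ``paper's own proof'' to compare against; your proposal simply supplies a proof where the paper chose to cite one.
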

\begin{proof}[(Proof of Theorem \ref{thm:morphdyndeg})] Let $T$ be the shift map on $\Phi_S$ and let $f:\Phi_S\rightarrow\mathbb{R}_{\geq0}$ be given by $f(\gamma)=\log\deg(\theta_1)$ for $\gamma=(\theta_s)_{s\geq1}$. Then Lemma \ref{shift} and Birkhoff's Ergodic Theorem imply that 
\begin{equation*}
\begin{split} 
\lim_{n\rightarrow\infty}\log \deg(\gamma_n)^{1/n}=\lim_{n\rightarrow\infty}\frac{\log\big(\prod_{j=1}^n\deg(\theta_j)\big)}{n}&=\lim_{n\rightarrow\infty}\frac{1}{n}\sum_{j=1}^n\log\deg(\theta_j) \\[4pt] 
&=\lim_{n\rightarrow\infty}\frac{1}{n}\sum_{j=0}^{n-1}f\circ T^j(\gamma)=\mathbb{E}[f]=\mathbb{E}_1[\log\deg(\phi)]
\end{split} 
\end{equation*} 
for $\bar{\nu}$-almost every $\gamma\in\Phi_S$; here we use that the degree of the composition of morphisms is the product of their corresponding degrees. 
\end{proof}  
\begin{remark}{\label{rmk:lln}} An alternate proof of Theorem \ref{thm:morphdyndeg} can be given using the strong law of large numbers \cite[Theorem 2.5.6]{durrett}. To see this, define the i.i.d random variables $X_i(\gamma)=\log\deg(\theta_i)$ for $\gamma=(\theta_n)_{n\geq1}$, and note that $S_n/n(\gamma):=(X_1(\gamma)+\dots +X_n(\gamma))/n=\log\deg(\gamma_n)^{1/n}$. Here we use that $S$ is a set of morphisms, ensuring that the degree of $\gamma_n$ is the product of its corresponding component degrees. However, we have instead used Birkhoff's Ergodic Theorem since it more closely resembles Kingman's Theorem, our tool for rational maps.  
\end{remark}   
\section{Arithmetic Degrees}\label{sec:arithdeg}
We begin this section with a proof that the dynamical degree bounds (a suitable notion) of the arithmetic growth rate of most orbits. 
\begin{proof}[(Proof of Theorem \ref{thm:arith<dyn})] Let $k\geq1$ be an integer and let $Q\in\mathbb{P}^N(\overline{\mathbb{Q}})_S$. Then a standard triangle inequality estimate (see, for instance, the proof of \cite[Theorem 3.11]{SilvDyn}) implies that \vspace{.1cm}
\begin{equation}\label{rat:bd1} 
h(f(Q))\leq\deg(f) \,h(Q)+C(k,S)\;\;\;\;\; \text{for all $f\in\Phi_{S,k}$}.\vspace{.1cm}  
\end{equation}  
To see this, note that there is such a constant for each $f$ and only finitely many $f$'s, since $S$ is a finite set. In particular, if $P\in \mathbb{P}^N(\overline{\mathbb{Q}})_S$, if $n\geq1$, and if  $\gamma_{nk}=f_n\circ f_{n-1}\circ\dots f_1$ for some $f_i\in \Phi_{S,k}$, then repeated application of the bound in (\ref{rat:bd1}) implies that \vspace{.25cm}  
\begin{equation*} 
\begin{split}
h(\gamma_{nk}(P))\leq&\deg(f_n)\deg(f_{n-1})\dots\deg(f_1)\Scale[.97]{\Big(h(P)+\frac{C(k,S)}{\deg(f_1)}+\frac{C(k,S)}{\deg(f_1)\deg(f_2)}+\dots+\frac{C(k,S)}{\deg(f_1)\dots\deg(f_n)}\Big)} \\[5pt]
\leq&\deg(f_n)\deg(f_{n-1})\dots\deg(f_1) \Big(h(P)+C(k,S)\Big). \vspace{.15cm}  
\end{split} 
\end{equation*}
Here we use our assumption that $S$ is degree independent, so that $\deg(f_i)\geq2$ for all $1\leq i\leq n$. In particular, \vspace{.1cm}  
\begin{equation}\label{rat:bd2} 
h(\gamma_{nk}(P))\leq \deg(f_n)\deg(f_{n-1})\dots\deg(f_1) \,C(k,S,P) \vspace{.15cm} 
\end{equation}
for all $n,k\geq1$, all $\gamma_{nk}\in\Phi_{S,nk}$ and all $P\in\mathbb{P}^N(\overline{\mathbb{Q}})_S$. From here we use Birkhoff's Ergodic Theorem to control the right hand side of (\ref{rat:bd2}) above. Namely, let $T_{(k)}:\Phi_S\rightarrow\Phi_S$ denote the $k$-shift map, $T_{(k)}:=T^k=T\circ T\circ\dots\circ T$. In particular, since the shift map $T$ is ergodic and measure preserving by Lemma \ref{shift}, so is $T_{(k)}$. Likewise, consider the random variable $F_{(k)}:\Phi_S\rightarrow\mathbb{R}_{\geq0}$ given by \vspace{.1cm} 
\[ F_{(k)}(\gamma)=\frac{\log\deg(\gamma_k)}{k}\;\;\;\;\ \text{for $\gamma\in\Phi_S$}.\]
Then, rewriting the bound in (\ref{rat:bd2}) and taking $nk$-th roots, we see that \vspace{.1cm} 
\begin{equation}\label{rat:bd3}
h(\gamma_{nk}(P))^{1/nk}\leq \bigg(\exp\frac{1}{n}\sum_{i=0}^{n-1}F_{(k)}\big(T_{(k)}^i(\gamma)\big)\bigg)
\,C(k,S,P)^{1/nk}.\end{equation}   
In particular, (\ref{rat:bd3}) implies that 
\begin{equation}\label{rat:bd4}
\limsup_{n\rightarrow\infty}h(\gamma_{nk}(P))^{1/nk}\leq\limsup_{n\rightarrow\infty}\bigg(\exp\,\frac{1}{n}\sum_{i=0}^{n-1}F_{(k)}\big(T_{(k)}^i(\gamma)\big)\bigg).
\end{equation} 
However, Birkhoff's Ergodic Theorem \ref{birk} implies that 
\begin{equation}\label{rat:lim}
\lim_{n\rightarrow\infty}\frac{1}{n}\sum_{i=0}^{n-1}F_{(k)}\big(T_{(k)}^i(\gamma)\big)=\mathbb{E}[F_{(k)}]
\end{equation} 
for almost every $\gamma\in\Phi_{S}$. Moreover, since a countable union of measure zero sets has measure zero, we see that the limit in (\ref{rat:lim}) is \textbf{true for all k}, for almost every $\gamma\in\Phi_S$. Therefore, (\ref{rat:bd4}), (\ref{rat:lim}), and the fact that the exponential function is continuous together imply that \vspace{.1cm} 
\begin{equation}\label{rat:bigbd}
\limsup_{n\rightarrow\infty}h(\gamma_{nk}(P))^{1/nk}\leq \exp\bigg(\mathbb{E}\Big[\frac{\log\deg(\gamma_k)}{k}\Big]\bigg) \vspace{.1cm}  
\end{equation}
holds for all $k$, for almost every $\gamma\in\Phi_S$.

We next show that the overall limit supremum in Theorem \ref{thm:arith<dyn} can be computed using the subsequence of multiples of $k$ (for any $k\geq1$). To do this, define constants \vspace{.1cm} 
\begin{equation}\label{rat:degbd} 
d_{S,k}:=\max_{\substack{f\in\Phi_{S,r}\\ 0\leq r<k}}\deg(f)\;\;\; \text{and}\;\;\;\ B_{S,k}:=\max_{0\leq r<k} C(r,S);\vspace{.1cm}
\end{equation} 
here, we remind the reader that $C(r,S)$ is the height bound constant given by \vspace{.1cm}
\begin{equation}\label{rat:degbd2}
C(r,S)=\max_{f\in\Phi_{S,r}}\sup_{Q\in\mathbb{P}^N(\overline{\mathbb{Q}})}\{h(f(Q))-\deg(f)h(Q)\}. \vspace{.1cm}
\end{equation} 
In particular, both $d_{S,k}$ and $B_{S,k}$ are finite since $S$ is a finite set. From here we proceed as in the proof of \cite[Proposition 12]{SilvermanPN}. Namely, for any $k\geq1$ we have that: \vspace{.15cm}   
\begin{equation}\label{rat:subseq}
\begin{split}
\limsup_{m\rightarrow\infty} h(\gamma_m(P))^{1/m}&=\limsup_{n\rightarrow\infty} \max_{0\leq r<k} h(\gamma_{r+nk}(P))^{1/(r+nk)}\\[5pt]
&\leq\limsup_{n\rightarrow\infty} \Big(d_{S,k}\,h(\gamma_{nk}(P))+B_{S,k}\Big)^{1/nk}\;\;\;\;\;\ \text{by (\ref{rat:bd1}),  (\ref{rat:degbd}), and (\ref{rat:degbd2})}\\[5pt] 
&=\limsup_{n\rightarrow\infty} h(\gamma_{nk}(P))^{1/nk} \vspace{.1cm}
\end{split} 
\end{equation} 
Hence, combining the bound in (\ref{rat:bigbd}) with (\ref{rat:subseq}), we see that \vspace{.2cm}
\begin{equation}{\label{rat:bd6}}
\limsup_{m\rightarrow\infty} h(\gamma_m(P))^{1/m}\leq \exp\bigg(\mathbb{E}\Big[\frac{\log\deg(\gamma_k)}{k}\Big]\bigg)=\exp\bigg(\frac{\mathbb{E}[\log\deg(\gamma_k)]}{k}\bigg) \vspace{.2cm}
\end{equation}
holds for all $k\geq1$, for almost every $\gamma\in\Phi_S$. In particular, letting $k\rightarrow\infty$, we deduce from (\ref{kinglim}) and (\ref{rat:bd6}) that \vspace{.1cm}
\[\limsup_{m\rightarrow\infty} h(\gamma_m(P))^{1/m}\leq \delta_{S,\nu}\]
as desired. 
\end{proof} 

We now prove that for sets of morphisms, if $P\in\mathbb{P}^N(\overline{\mathbb{Q}})$ is an almost surely wandering point, then the arithmetic degree of a $P$-orbit exists and is equal to the dynamical degree almost surely. To do this, we use the notion of canonical heights associated to infinite sequences $\gamma\in\Phi_S$ defined in \cite{Kawaguchi} and revisited in \cite{stochastic}. In particular, if $S$ is height controlled, then for all points $P\in\mathbb{P}^N(\overline{\mathbb{Q}})$ and all sequences $\gamma\in\Phi_S$, the limit 
\begin{equation}\label{canht}
\hat{h}_\gamma(P)=\lim_{n\rightarrow\infty}\frac{h(\gamma_n(P))}{\deg(\gamma_n)}
\end{equation} 
exists; see \cite[Theorem 1.1]{Kawaguchi} or \cite[Theorem 1.2]{stochastic}. Moreover, we call $\hat{h}_{\gamma}(P)$ the \emph{canonical height} of the pair $(\gamma,P)$. With this tool in place, we are ready to relate the dynamical and arithmetic degrees for most sequences of elements of $S$.  
\begin{proof}[(Proof of Theorem \ref{thm:arithdeg})] If $P\in\mathbb{P}^N(\overline{\mathbb{Q}})$ is almost surely wandering for a set or morphisms $S$, then $\hat{h}_\gamma(P)>0$ for almost every $\gamma\in\Phi_S$; see \cite[Remark 2.3]{stochastic}. In particular, for each such $\gamma$ we can choose $\epsilon_\gamma>0$ and $m_\gamma$ so that  \vspace{.15cm}
\[\hat{h}_\gamma(P)-\epsilon_\gamma\leq\frac{h((\gamma_n(P))}{\deg(\gamma_n)}\leq \hat{h}_\gamma(P)+\epsilon_\gamma \;\;\;\;\;\text{and}\;\;\;\;\; \hat{h}_\gamma(P)-\epsilon>0  \vspace{.1cm}\]    
for all $n>m_\gamma$. In particular, it follows that  \vspace{.15cm}  
\begin{equation}\label{bd1}
(\hat{h}_\gamma(P)-\epsilon_\gamma)^{1/n}\deg(\gamma_n)^{1/n}\leq h(\gamma_n(P))^{1/n}\leq (\hat{h}_\gamma(P)+\epsilon_\gamma)^{1/n}\deg(\gamma_n)^{1/n}  \vspace{.15cm}
\end{equation}  
for all $n>m_\gamma$. On the other hand, Theorem \ref{thm:dyndeg} implies that $\displaystyle{\lim\deg(\gamma_n)^{1/n}=\delta_{S,\nu}}$ for almost every $\gamma\in\Phi_S$. Hence, Theorem \ref{thm:dyndeg} and (\ref{bd1}) together imply that \vspace{.25cm}   
\begin{equation*}
\begin{split}  
\delta_{S,\nu}=1\cdot\lim_{n\rightarrow\infty}\deg(\gamma_n)^{1/n}=\lim_{n\rightarrow\infty}&(\hat{h}_\gamma(P)-\epsilon_\gamma)^{1/n}\deg(\gamma_n)^{1/n}\leq\alpha_\gamma(P)\\[6pt]  
\text{and} \\[6pt] 
\alpha_\gamma(P)\leq\lim_{n\rightarrow\infty}(\hat{h}_\gamma(P)+\epsilon_\gamma)^{1/n}\deg&(\gamma_n)^{1/n}\leq 1\cdot\lim_{n\rightarrow\infty}\deg(\gamma_n)^{1/n}=\delta_{S,\nu} \vspace{25cm}   
\end{split} 
\end{equation*}
for almost every $\gamma\in\Phi_S$. In particular, we see that $\delta_{S,\nu}=\alpha_{\gamma}(P)$ for such $\gamma\in\Phi_S$.  
\end{proof}  
Combining our knowledge of dynamical and arithmetic degrees for sets of morphisms, we prove an asymptotic orbit counting result. 
\begin{proof}[(Proof of Corollary \ref{cor:arithdeg})] We follow the proof of \cite[Proposition 3]{KawaguchiSilverman}, which we include for completeness. Let $\gamma\in\Phi_S$ be such that  $\alpha_\gamma(P)=\delta_{S,\nu}$, true of almost every $\gamma$ by Theorem \ref{thm:arithdeg}. Then for every $\epsilon>0$ there is an integer $n_0=n_0(\epsilon,\gamma)$ so that 
\[(1-\epsilon)\delta_{S,\nu}\leq h(\gamma_n(P))^{1/n}\leq(1+\epsilon)\delta_{S,\nu}\]
for all $n\geq n_0$. In particular, it follows that \vspace{.1cm} 
\begin{equation*}
\begin{split} 
\{n\geq n_0\,:\,(1+\epsilon)\delta_{S,\nu}\leq B^{1/n}\}&\subset\{n\geq n_0\,:\,h(\gamma_n(P))\leq B\} \\[2pt] 
&\text{and}\\[2pt] 
\{n\geq n_0\,:\,h(\gamma_n(P))\leq B\}&\subset\{n\geq n_0\,:\,(1-\epsilon)\delta_{S,\nu}\leq B^{1/n}\}. 
\end{split} 
\end{equation*}
Therefore, after counting the number elements in the sets above, we see that 
\begin{equation*}
\begin{split} 
\frac{\log(B)}{\log((1+\epsilon)\delta_{S,\nu})}-1&\leq\#\{n\geq0\,:\,h(\gamma_n(P))\leq B\}\\[2pt] 
&\text{and}\\[2pt] 
\#\{n\geq0\,:\,h(\gamma_n(P))\leq B\}&\leq\frac{\log(B)}{\log((1-\epsilon)\delta_{S,\nu})}+n_0+1.  \vspace{.2cm}
\end{split} 
\end{equation*} 
Hence, dividing by $\log(B)$ and letting $B$ tend to infinity gives \vspace{.25cm} 
\begin{equation*}
\begin{split} 
\frac{1}{\log((1+\epsilon)\delta_{S,\nu})}&\leq\liminf_{B\rightarrow\infty}\frac{\#\{n\geq0\,:\,h(\gamma_n(P))\leq B\}}{\log(B)}\\[3pt] 
&\text{and} \\[3pt] 
\limsup_{B\rightarrow\infty}\frac{\#\{n\geq0\,:\,h(\gamma_n(P))\leq B\}}{\log(B)}&\leq \frac{1}{\log((1-\epsilon)\delta_{S,\nu})}.  \vspace{.1cm}
\end{split}  
\end{equation*}  
In particular, since $\epsilon$ was arbitrary, we deduce that   \vspace{.25cm}
\[\lim_{B\rightarrow\infty}\frac{\#\{n\geq0\,:\,h(\gamma_n(P))\leq B\}}{\log(B)}=\frac{1}{\log(\delta_{S,\nu})}\] 
for almost every $\gamma\in\Phi_S$ as claimed. 
\end{proof} 
\begin{remark} For rational maps, one can use Theorem \ref{thm:arith<dyn} to give an asymptotic \emph{upper} bound on the cardinality of the set $\{n:h(\gamma_n(P))\leq B\}$ for initial points $P\in\mathbb{P}^N(\overline{\mathbb{Q}})_S$.  
\end{remark} 
\section{Almost Surely Wandering Points}\label{sec:aswandering}
Let $S$ be a height controlled set of endomorphisms on $\mathbb{P}^N$. Given the (nearly) uniform control over height growth rates that is possible when $P$ is an almost surely wandering point for $S$ (see Theorem \ref{thm:arithdeg} and Corollary \ref{cor:arithdeg} above), it is useful to have an alternative characterization of this property, which we now establish. To do this, we use the expected canonical height function $\mathbb{E}_{\bar{\nu}}[\hat{h}]:\mathbb{P}^N\rightarrow\mathbb{R}$ attached to pairs $(S,\nu)$ defined in \cite[Theorem 1.2]{stochastic}. As we will see, this height function is a useful tool for analyzing the collective action of the maps in $S$ on points in $\mathbb{P}^N$; see, for instance, \cite[Corollary 1.4]{stochastic}. 

To define $\mathbb{E}_{\bar{\nu}}[\hat{h}]$, note first that to any (fixed) point $Q\in\mathbb{P}^N(\overline{\mathbb{Q}})$, we can define a function $\hat{h}_Q:\Phi_S\rightarrow\mathbb{R}$ given by $\hat{h}_Q(\gamma)=\hat{h}_\gamma(Q)$; see (\ref{canht}) above. In particular, since $\Phi_S$ is a probability space, we may view $\hat{h}_Q$ as a random variable and ask about its expected (or average) value: 
\begin{equation}\label{expht}
\mathbb{E}_{\bar{\nu}}[\hat{h}](Q):=\mathbb{E}[\hat{h}_Q]=\int_{\Phi_S}\hat{h}_Q(\gamma)d\nu 
\end{equation} 
In particular, the expected canonical height function $\mathbb{E}_{\bar{\nu}}[\hat{h}]$ satisfies analogs of the usual properties of canonical heights (when $S=\{\phi\}$) due to Call-Silverman; see \cite[Theorem 1.2]{stochastic}. For example, the kernel of $\mathbb{E}_{\bar{\nu}}[\hat{h}]$ is exactly the set of points with finite $S$-orbit, independent of the choice of $\nu$. To state this result formally, we say that $(S,\nu)$ is \emph{strictly positive} if $\nu(\phi)>0$ for all $\phi\in S$. 

\begin{corollary}\label{cor:finiteorbit} Let $(S,\nu)$ be a strictly positive, height controlled collection of endomorphisms on $\mathbb{P}^N$. Then for all $Q\in\mathbb{P}^N(\overline{\mathbb{Q}})$, the following are equivalent: \vspace{.05cm} 
\begin{enumerate}
\item[\textup{(1)}] $Q$ has finite $S$-orbit. \vspace{.15cm} 
\item[\textup{(2)}] $\bar{\nu}\big(\{\gamma\in\Phi_S\,:\, \text{$\Orb_\gamma(Q)$ is finite}\}\big)=1$. \vspace{.15cm}  
\item[\textup{(3)}] $\mathbb{E}_{\bar{\nu}}[\hat{h}](Q)=0.$  
\end{enumerate}    
\end{corollary}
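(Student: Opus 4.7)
The plan is to prove the equivalences via the cycle $(1) \Rightarrow (2) \Rightarrow (3) \Rightarrow (1)$, with the closing implication being an appeal to the kernel characterization from \cite[Corollary 1.4]{stochastic}.

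The implication $(1) \Rightarrow (2)$ is immediate: if $\Orb_S(Q) = \bigcup_{\gamma \in \Phi_S} \Orb_\gamma(Q)$ is finite, then every individual orbit $\Orb_\gamma(Q)$ is a subset of this same finite set, so the event in (2) coincides with all of $\Phi_S$ and therefore has $\bar\nu$-measure one.

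For $(2) \Rightarrow (3)$, I will leverage that $S$ is height controlled, so $d_\phi \geq 2$ for every $\phi \in S$, and since $S$ consists of morphisms, $\deg(\gamma_n) = \prod_{i=1}^n d_{\theta_i} \geq 2^n \to \infty$. If $\Orb_\gamma(Q)$ is finite, then $h(\gamma_n(Q))$ ranges over a finite set and is bounded by some $M = M(\gamma, Q) < \infty$; dividing this bounded sequence by $\deg(\gamma_n)$ in the defining limit (\ref{canht}) yields $\hat h_\gamma(Q) = 0$. By hypothesis this holds on a set of full $\bar\nu$-measure, and since $\hat h_\gamma(Q) \geq 0$ always (as a limit of nonnegative quantities), integrating against $\bar\nu$ gives $\mathbb{E}_{\bar\nu}[\hat h](Q) = 0$.

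The remaining implication $(3) \Rightarrow (1)$ is the main obstacle, and my plan is to deduce it directly from \cite[Corollary 1.4]{stochastic}, whose content is precisely that the vanishing locus of $\mathbb{E}_{\bar\nu}[\hat h]$ consists of points with finite $S$-orbit whenever $(S,\nu)$ is strictly positive and height controlled. The substantive work encoded in that cited result has two parts that would need to be invoked in full: first, a propagation step showing that $\mathbb{E}_{\bar\nu}[\hat h](Q) = 0$ forces $\mathbb{E}_{\bar\nu}[\hat h](\phi(Q)) = 0$ for every $\phi \in S$, obtained by decomposing the integral over $\Phi_S$ according to the value of the first coordinate $\gamma_1$ and using the shift-rescale identity $\hat h_\gamma(\phi(Q)) = d_\phi\, \hat h_{(\phi,\gamma)}(Q)$ together with strict positivity of $\nu$ (which is what prevents any single map's contribution from being ignored); and second, a Northcott-type bound of the form $h \leq \mathbb{E}_{\bar\nu}[\hat h] + O(1)$ which, combined with the first step applied iteratively, forces $\Orb_S(Q)$ to have uniformly bounded Weil height inside a fixed number field of definition, and hence to be finite. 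The delicacy I expect is purely bookkeeping: tracking that almost-everywhere vanishing of $\hat h_\gamma(Q)$ propagates correctly through the integral decomposition, but this is handled in the reference.
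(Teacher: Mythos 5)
Your proof is correct and takes essentially the same approach as the paper: the paper offers no independent argument for this corollary, only a remark pointing to \cite[Corollary 1.4]{stochastic}, the same reference you invoke for the substantive implication $(3)\Rightarrow(1)$. Your explicit verification of the elementary implications $(1)\Rightarrow(2)\Rightarrow(3)$ (finite $S$-orbit forces every $\gamma$-orbit to be finite; bounded height plus $\deg(\gamma_n)\geq 2^n\to\infty$ kills $\hat h_\gamma(Q)$; nonnegativity then kills the integral) is sound and supplies detail the paper leaves implicit.
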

\begin{remark} See \cite[Corollary 1.4]{stochastic} for a more general version of this result. 
\end{remark} 
We are now ready to prove our alternative characterization of almost surely wandering points from the introduction. Namely, $P$ is \emph{not} almost surely wandering if and only if there exists some point $Q\in\Orb_S(P)$ such that $\mathbb{E}_{\bar{\nu}}[\hat{h}](Q)=0$; equivalently, by Corollary \ref{cor:finiteorbit}, $P$ is not almost surely wandering if and only if some point in the $S$-orbit of $P$ has finite $S$-orbit; see Example \ref{eg:halffinite} above for an illustration of this phenomenon.         
\begin{proof}[(Proof of Theorem \ref{thm:a.s})] Suppose that there exists $Q\in\Orb_S(P)$ such that $\mathbb{E}_{\bar{\nu}}[\hat{h}_Q]=0$. Then $Q=\gamma_m(P)$ for some $\gamma_m\in\Phi_{S,m}$ and some $m\geq0$. Now consider the set \vspace{.15cm}
\[U(\gamma_m)=\{\rho\in\Phi_S\,:\,\rho_m=\gamma_m\}.\]
Then, since $(\Phi_S,\bar{\nu})$ is the space of i.i.d sequences of elements of $S$ (i.e., we endow $\Phi_S=\prod_{i=1}^\infty S$ with the natural product measure induced by $\nu$), we see that 
\[\bar{\nu}\big(U(\gamma_m)\big)=\nu_m\big(\{\rho_m\in\Phi_{S,m}\,:\,\rho_m=\gamma_m\}\big)=\prod_{i=1}^m\nu(\theta_i), \;\;\;\text{where}\;\;\gamma_m=\theta_m\circ\theta_{m-1}\circ\dots\circ\theta_1.\]
Therefore, $\bar{\nu}(U(\gamma_m))>0$; here we use that $\nu$ is strictly positive. On the other hand, if $\gamma\in U(\gamma_m)$, then $\Orb_\gamma(P)$ is finite. To see this, note that \vspace{.15cm}   
\begin{equation*}
\begin{split} 
\Orb_\gamma(P)&=\{P,\gamma_1(P),\gamma_2(P),\dots,\gamma_{m-1}(P)\}\cup \Orb_{\,T^m(\gamma)}(Q) \\[5pt] 
&\subseteq \{P,\gamma_1(P),\gamma_2(P),\dots,\gamma_{m-1}(P)\}\cup\Orb_S(Q).  
\end{split} 
\end{equation*} 
But $\Orb_S(Q)$ is finite, since $\mathbb{E}_{\bar{\nu}}[\hat{h}_Q]=0$; see Corollary \ref{cor:finiteorbit} above. Therefore, \vspace{.1cm}  
\[U(\gamma_m)\subseteq \{\gamma\in\Phi_S:\,\Orb_\gamma(P)\; \text{is finite}\}; \vspace{.1cm}\]
hence, the latter set has positive measure. In particular, the point $P$ is not almost surely wandering for $(S,\nu)$ as claimed.   

We now show that if $\mathbb{E}_{\bar{\nu}}[\hat{h}_Q]>0$ for all $Q\in\Orb_S(P)$, then $P$ is almost surely wandering. To do this, we need a few elementary lemmas. However, in order to make clean statements of these auxiliary results, we introduce some additional notation. Let $M_S$ be the monoid of all functions obtained by composing finitely many elements of $S$ (together with the identity function id on $\mathbb{P}^1$): \vspace{.1cm}
\[M_S=\Big\{\rho\in K(x)\,:\;\rho=\text{id}\;\,\text{or}\,\;\rho=(\phi_n\circ\phi_{n-1}\circ\dots \circ\phi_1)\;\; \text{for $\phi_i\in S$, $n\geq1$}\Big\}. \vspace{.1cm} \]
In particular, given a initial point $P\in\mathbb{P}^N(\overline{\mathbb{Q}})$, we may redefine the $S$-\emph{orbit} of $P$ to be \vspace{.15cm} 
\begin{equation}{\label{orbitaldef}}
\Orb_S(P):=\big\{\rho(P)\;:\:\text{for $\rho\in M_S$}\big\};
\end{equation} 
compare to the definition given in the introduction.  
\begin{lemma}[The Escape Lemma]\label{lem:escape} Let $X$ be a set and let $S$ be any collection of self maps on $X$. Suppose that $F=\{Q_1,Q_2, \dots, Q_n\}$ is a finite subset of $X$ with the following properties: \vspace{.2cm} 
\begin{enumerate}
\item[\textup{(1)}] $\phi(X\mysetminus F)\subseteq X\mysetminus F$ for all $\phi\in S$, \vspace{.2cm} 
\item[\textup{(2)}] For each $Q_i\in F$, there exists $f_i\in M_S$ such that $f_i(Q_i)\not\in F$. \vspace{.2cm}
\end{enumerate}  
Then there exists $g\in M_S$ such that $g(Q_i)\not\in F$ for all $Q_i\in F$. 
\begin{remark} We call a function $g$ as in Lemma \ref{lem:escape} an \emph{escape function} for the subset $F$.  
\end{remark} 
\end{lemma}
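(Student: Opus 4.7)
The plan is to prove this by induction on the number of points of $F$ whose image under a candidate function $g \in M_S$ still lies in $F$. The pivotal observation is that hypothesis (1), applied repeatedly, immediately upgrades to: for every $\rho \in M_S$, $\rho(X \mysetminus F) \subseteq X \mysetminus F$. In words, once a point escapes $F$, it stays out of $F$ forever under any subsequent element of $M_S$. This ``trap-complement'' property is exactly what allows the individual escape functions $f_i$ from hypothesis (2) to be stitched together without reintroducing points into $F$.

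Concretely, for each $g \in M_S$ define the bad-index set
\[B(g) = \{i \in \{1,\dots,n\} : g(Q_i) \in F\}.\]
I would construct a sequence $g_0, g_1, g_2, \ldots$ in $M_S$ with $\lvert B(g_k)\rvert$ strictly decreasing until it reaches zero, at which point $g_k$ is the desired escape function. Start with $g_0 = \text{id}$, so possibly $B(g_0) = \{1, \ldots, n\}$.

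For the inductive step, suppose $B(g_k) \neq \varnothing$ and pick any $i \in B(g_k)$. Then $g_k(Q_i) \in F$, say $g_k(Q_i) = Q_j$. Hypothesis (2) produces $f_j \in M_S$ with $f_j(Q_j) \notin F$; set $g_{k+1} = f_j \circ g_k \in M_S$. By construction $g_{k+1}(Q_i) = f_j(Q_j) \notin F$, so $i \notin B(g_{k+1})$. For any other index $\ell \notin B(g_k)$, we have $g_k(Q_\ell) \in X \mysetminus F$, and the trap-complement property applied to $f_j \in M_S$ gives $g_{k+1}(Q_\ell) = f_j(g_k(Q_\ell)) \notin F$. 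Hence $B(g_{k+1}) \subseteq B(g_k) \mysetminus \{i\}$, so $\lvert B(g_{k+1})\rvert < \lvert B(g_k)\rvert$. Since $B(g_0)$ is finite, after at most $n$ steps we obtain $g := g_n$ with $B(g) = \varnothing$, which is exactly the conclusion of the lemma.

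The only step requiring a moment's care is the trap-complement property itself, which I would record as a one-line preliminary observation: any $\rho \in M_S$ is a composition $\phi_m \circ \cdots \circ \phi_1$ of elements of $S$ (or the identity), and repeated application of hypothesis (1) gives $\rho(X \mysetminus F) \subseteq X \mysetminus F$. Beyond that, the argument is purely a finite bookkeeping induction and presents no real obstacle; the finiteness of $F$ is what guarantees termination.
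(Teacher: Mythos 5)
Your proposal is correct and is essentially the same argument as the paper's: both build the escape function by iteratively composing individual $f_j$'s and rely on hypothesis (1) (your ``trap-complement'' observation) to guarantee that points already escaped stay escaped. The paper fixes the ordering $Q_1,\dots,Q_n$ and starts from $g_1 = f_1$ rather than $g_0 = \mathrm{id}$, while you track the shrinking bad set $B(g_k)$ with a free choice of index at each step, but these are cosmetic variations on the same finite bookkeeping induction.
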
 
\begin{proof}[(Proof of Lemma \ref{lem:escape})] Let $g_1=f_1$, and for $m>1$ define 
\[
\begin{cases} 
g_m=g_{m-1} & \text{if $g_{m-1}(Q_m)\not\in F$},\\[3pt] 
g_m=f_{j_m}\circ g_{m-1} & \text{if $g_{m-1}(Q_m)\in F$, where $g_{m-1}(Q_m)=Q_{j_m}$}.
\end{cases}
\]
Then it is easy to check that $g=g_n$ is an escape function for $F$.   
\end{proof}  
In order to apply the escape lemma above to prove the final direction of Theorem \ref{thm:a.s}, we need a few facts pertaining to heights. To state these facts, let $C_S:=\max\big\{\sup_{\phi\in S} C(\phi),1\big\}$; for a definition of $C(\phi)$ see (\ref{htconstant}) above. Note that $C_S$ is well defined, since $S$ is finite.  
\begin{lemma}\label{lem:stablecomplement} Let $K/\mathbb{Q}$ be a finite extension over which every map in $S$ is defined, let $P\in \mathbb{P}^N(K)$, and let 
\[F_{P,S}:=\{Q\in \Orb_S(P)\,:\, h(Q)\leq 2C_S\}.\] 
Then 
\[\phi\big(\Orb_S(P)\mysetminus F_{P,S}\big)\subseteq \Orb_S(P)\mysetminus F_{P,S},\vspace{.5cm}\]
for all $\phi\in S$. Equivalently, the complement of $F_{P,S}$ in $\Orb_S(P)$ is $S$-stable. 
\end{lemma}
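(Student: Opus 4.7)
The plan is to prove this by a direct height-growth estimate using the height-controlled hypothesis. Since $S$ is height controlled, we have $d_\phi \geq 2$ and $|h(\phi(Q)) - d_\phi h(Q)| \leq C(\phi) \leq C_S$ for every $\phi \in S$ and every $Q \in \mathbb{P}^N(\overline{\mathbb{Q}})$, where $C_S = \max\{\sup_{\phi\in S} C(\phi), 1\}$ as defined just above the lemma. So the key quantitative input is already available; I just need to combine it with the threshold $2C_S$ chosen in the definition of $F_{P,S}$.

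First I would fix $\phi \in S$ and take an arbitrary $Q \in \Orb_S(P) \setminus F_{P,S}$, so that $h(Q) > 2C_S$. I need to verify two things about $\phi(Q)$: (i) it still lies in $\Orb_S(P)$, and (ii) its height strictly exceeds $2C_S$. For (i), note $Q = \rho(P)$ for some $\rho \in M_S$, hence $\phi(Q) = (\phi \circ \rho)(P)$, and $\phi \circ \rho \in M_S$ by closure of $M_S$ under composition, so $\phi(Q) \in \Orb_S(P)$ via the description in \eqref{orbitaldef}. For (ii), the height-controlled bound gives
\[
h(\phi(Q)) \;\geq\; d_\phi\, h(Q) - C(\phi) \;\geq\; 2\, h(Q) - C_S \;>\; 2(2C_S) - C_S \;=\; 3C_S \;>\; 2C_S,
\]
where I used $d_\phi \geq 2$, $C(\phi) \leq C_S$, and the hypothesis $h(Q) > 2C_S$. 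Hence $\phi(Q) \in \Orb_S(P) \setminus F_{P,S}$, as required.

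There is essentially no serious obstacle here; the only subtlety is making sure that the threshold $2C_S$ is large enough that one application of $\phi$ cannot drop the height back into $F_{P,S}$. The choice $2C_S$ (together with $d_\phi \geq 2$) is exactly calibrated so that $2 h(Q) - C_S > 2C_S$ whenever $h(Q) > 2C_S$, which is the inequality driving the argument. The role of the number field $K$ is only to ensure that the heights and the maps are all defined over a common field, so that $C_S$ and the orbit are well-defined; it plays no role in the inequality itself. Since $\phi \in S$ was arbitrary, this proves $\phi(\Orb_S(P) \setminus F_{P,S}) \subseteq \Orb_S(P) \setminus F_{P,S}$ for every $\phi \in S$, i.e., the complement of $F_{P,S}$ in $\Orb_S(P)$ is $S$-stable.
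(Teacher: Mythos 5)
Your proof is correct and matches the paper's argument essentially verbatim: both take $Q \in \Orb_S(P)\setminus F_{P,S}$, invoke the height-controlled bound $h(\phi(Q)) \geq d_\phi h(Q) - C_S \geq 2h(Q) - C_S > 3C_S > 2C_S$, and observe that $\Orb_S(P)$ is $S$-stable by the definition of the monoid $M_S$.
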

\begin{proof}[(Proof of Lemma \ref{lem:stablecomplement})] If $Q\in\Orb_S(P)\mysetminus F_{P,S}$, then $h(Q)>2C_S$. On the other hand, since $h(\phi(Q))\geq d_\phi h(Q)-C_S$ for all $\phi\in S$ (by definition of $C_S$) and $d_\phi\geq2$ by assumption, we see that \vspace{.1cm}  
\[ h(\phi(Q))\geq 2h(Q)-C_S>3C_S>2C_S\]
for all $\phi\in S$. In particular, the complement of $F_{P,S}$ in $\Orb_S(P)$ is $S$-stable as claimed; here we use also that $\Orb_P(S)$ is $S$-stable by the definition in (\ref{orbitaldef}) above.   
\end{proof} 
As a final preliminary step, we note the following fact.  
\begin{lemma}\label{lem:hatzero} If $\hat{h}_\gamma(P)=0$, then $\Orb_\gamma(P)\subseteq F_{P,S}$.  
\end{lemma}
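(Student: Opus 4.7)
The plan is to argue by contrapositive. Suppose there exists some $Q = \gamma_m(P) \in \Orb_\gamma(P)$ (for some $m \geq 0$, using the convention $\gamma_0 = \text{id}$) with $h(Q) > 2C_S$; I will show that this forces $\hat{h}_\gamma(P) > 0$, contradicting the hypothesis.

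First I would establish the standard one-step comparison. Since $C(\phi) \leq C_S$ for every $\phi \in S$, the defining property (\ref{functoriality}) gives
\[ h(\gamma_n(P)) \geq d_{\phi_n}\, h(\gamma_{n-1}(P)) - C_S \]
for each $n \geq 1$, where $\gamma_n = \phi_n \circ \gamma_{n-1}$. Dividing by $\deg(\gamma_n) = d_{\phi_n}\deg(\gamma_{n-1})$ yields
\[ \frac{h(\gamma_n(P))}{\deg(\gamma_n)} \geq \frac{h(\gamma_{n-1}(P))}{\deg(\gamma_{n-1})} - \frac{C_S}{\deg(\gamma_n)}, \]
the usual Call--Silverman-style telescoping estimate adapted to the sequence $\gamma$.

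Next I would iterate this inequality from index $m$ up to $n > m$ to obtain
\[ \frac{h(\gamma_n(P))}{\deg(\gamma_n)} \geq \frac{h(\gamma_m(P))}{\deg(\gamma_m)} - \sum_{i=m+1}^{n}\frac{C_S}{\deg(\gamma_i)}. \]
Using that $S$ is height controlled, so $d_\phi \geq 2$ for all $\phi \in S$, and the product rule $\deg(\gamma_i) = \prod_{j\leq i} d_{\phi_j}$ valid for endomorphisms of $\mathbb{P}^N$, we have $\deg(\gamma_i) \geq 2^{i-m}\deg(\gamma_m)$. Hence the tail sum is bounded above by
\[ \frac{C_S}{\deg(\gamma_m)}\sum_{j=1}^{\infty} 2^{-j} = \frac{C_S}{\deg(\gamma_m)}. \]

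Finally I would let $n \to \infty$ and invoke the existence of the canonical height limit (\ref{canht}), guaranteed by the height-controlled hypothesis. This gives
\[ \hat{h}_\gamma(P) \geq \frac{h(\gamma_m(P)) - C_S}{\deg(\gamma_m)} > 0, \]
because $h(\gamma_m(P)) > 2C_S > C_S$ and $\deg(\gamma_m) \geq 1$. This contradicts $\hat{h}_\gamma(P) = 0$, so every iterate $\gamma_m(P)$ must satisfy $h(\gamma_m(P)) \leq 2C_S$ and therefore lies in $F_{P,S}$. There is no real obstacle here beyond carefully handling the base case $m=0$ (i.e.\ $Q = P$ itself, where $\deg(\gamma_0)=1$), which the telescoping argument covers uniformly; the essential input is the height-controlled condition, which simultaneously secures the existence of $\hat{h}_\gamma(P)$ and the geometric-series tail bound.
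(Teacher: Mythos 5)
Your proof is correct and follows essentially the same route as the paper: both argue the contrapositive via Tate's telescoping estimate. The only cosmetic difference is that the paper invokes the telescoped inequality $h(\rho_r(Q))\geq\deg(\rho_r)\bigl(h(Q)-\tfrac{C_S}{d-1}\bigr)$ directly as a cited lemma (Lemma 2.1 of \cite{stochastic}), applied to the tail map $\rho_{n-m}=\theta_n\circ\cdots\circ\theta_{m+1}$ from the point $\gamma_m(P)$, whereas you re-derive the same geometric-series telescope from the one-step bound before dividing by $\deg(\gamma_n)$.
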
 
\begin{proof}[(Proof of Lemma \ref{lem:hatzero})] We prove the contrapositive. Write $\gamma=(\theta_i)_{i=1}^\infty$ for some $\theta_i\in S$ and suppose that $\Orb_\gamma(P)\not\subseteq F_{P,S}$, i.e., there exists an integer $m\geq0$ and $Q=\gamma_m(P)$ such that $h(Q)>2C_S$. Now let $d=\min_{\phi\in S}d_\phi$ so that $d\geq2$ by assumption. Then an adaptation of Tate's telescoping argument implies that for any $Q\in\mathbb{P}^N(\overline{\mathbb{Q}})$ and any map $\rho_r\in M_S$ of length $r$, we have that \vspace{.1cm}    
\[h(\rho_r(Q))\geq \deg(\rho_r)\Big(h(Q)-\frac{C_S}{(d-1)}\Big)\geq\deg(\rho_r)(h(Q)-C_S);\]
see \cite[Lemma 2.1]{stochastic}. Here we use that $d\geq2$. In particular, applying the bound above to the map $\rho_{n-m}=\theta_n\circ\theta_{n-1}\dots \theta_{m+1}$ and the point $Q=\gamma_m(P)$, we see that \vspace{.1cm}  
\[h(\gamma_n(P))\geq \deg(\rho_{n-m}) (h(Q)-C_S)\geq \deg(\rho_{n-m})C_S.\vspace{.1cm}\]
Therefore, dividing both sides by $\deg(\gamma_n)$ we see that \vspace{.1cm} 
\[\frac{h(\gamma_n(P))}{\deg(\gamma_n)}\geq \frac{1}{\deg(\gamma_m)}C_S. \vspace{.1cm} \]
In particular, letting $n$ tend to infinity (note that the right-hand-side of the bound above does not depend on $m$), we see that $\hat{h}_\gamma(P)>0$ as claimed. 
\end{proof} 
We now return to the proof of Theorem \ref{thm:a.s}. Assume that $\mathbb{E}_{\bar{\nu}}[\hat{h}](Q)>0$ for all points $Q$ in the $S$-orbit of $P$, and choose a finite extension $K/\mathbb{Q}$ such that the initial point $P\in\mathbb{P}^1(K)$ and the maps in $S$ are all defined over $K$. Note that $X=\Orb_S(P)$ and $F=F_{P,S}$ (a finite set by Northcott's Theorem) satisfy condition (1) of Lemma \ref{lem:escape} by Lemma \ref{lem:stablecomplement}. Moreover, condition (2) of Lemma \ref{lem:escape} is also satisfied. For if (2) fails, then there exists $Q\in F_{P,S}$ such that $\Orb_S(Q)\subseteq F_{P,S}$; hence $\mathbb{E}_{\bar{\nu}}[\hat{h}](Q)=0$ by \cite[Corollary 1.4]{stochastic}, contradiction. In particular, Lemma \ref{lem:escape} implies that there exists an escape function $g=g_{S,P}$ for $F$. Now consider the set 
\[I_{S,P}=\Big\{(\theta_i)_{i=1}^\infty\in\Phi_S\,:\,\theta_n\circ\theta_{n-1}\dots\circ\theta_m=g\;\, \text{for some $n>m$}\Big\};\]
in words, $I_{S,P}$ is the set of sequences where the finite string $g$ appears at some point. In particular, it follows from the well known ``Infinite Monkey Theorem" that $\bar{\nu}(I_{S,P})=1$; in fact, the (smaller) set of sequences where $g$ appears infinitely often already has full measure by Kolmogorov zero-one law; see \cite[Theorem 10.6]{ProbabilityText}. 

Finally, we will show that if $\gamma\in I_{S,P}$ then $\hat{h}_\gamma(P)>0$. To see this, write $\gamma_n=g\circ\gamma_m$ for some $n>m$. If $\gamma_m(P)\not\in F_{S,P}$, then Lemma \ref{lem:hatzero} implies that $\hat{h}_\gamma(P)>0$. On the other hand, if $\gamma_{m}(P)\in F_{S,P}$, then $g(\gamma_m(P))\not\in F_{P,S}$ by definition of $g$; see Lemma \ref{lem:escape}. Therefore, $\gamma_n(P)=g(\gamma_m(P))\not\in F_{S,P}$, and $\hat{h}_\gamma(P)>0$ as well. In particular, 
\begin{equation}{\label{eq:measure0}} 
\bar{\nu}(\{\gamma\in\Phi_S: \hat{h}_\gamma(P)=0\})=0.
\end{equation}
However, \cite[Theorem 1.1]{Kawaguchi} implies that $\hat{h}_\gamma(P)=0$ if and only if $\Orb_\gamma(P)$ is finite. Therefore, (\ref{eq:measure0}) implies that 
\[ \bar{\nu}\big(\{\gamma\in\Phi_S:\,\Orb_\gamma(P)\; \text{is finite}\}\big)=0.\]
Hence, $P$ is almost surely wandering for $(S, \nu)$ as claimed.     
\end{proof} 
    
\end{document}